\documentclass[10pt,leqno]{amsart}
%%%%%%%%%%%%%%%%%%%%%%%%%%%%%%%%%%
\theoremstyle{definition}%non italic environment
%%%%%%%%%%%%%%%%%%%%%%%%%%%%%%%%
\topmargin= .5cm
\textheight= 22.5cm
\textwidth= 32cc
\baselineskip=16pt
\usepackage{indentfirst, amssymb,amsmath,amsthm}
\usepackage{csquotes,color}
\evensidemargin= .9cm
\oddsidemargin= .9cm
\newtheorem*{theoA}{Theorem A}
\newtheorem*{theoB}{Theorem B}
\newtheorem*{theoC}{Theorem C}
\newtheorem*{theoD}{Theorem D}

\newtheorem{theo}{Theorem}[section]
\newtheorem{lem}{Lemma}[section]

\newtheorem{exm}{Example}[section]
\newtheorem{defi}{Definition}[section]
\newtheorem{rem}{Remark}[section]
\newtheorem{question}{Question}[section]
\newcommand{\ol}{\overline}
\newcommand{\be}{\begin{equation}}
\newcommand{\ee}{\end{equation}}
\newcommand{\beas}{\begin{eqnarray*}}
\newcommand{\eeas}{\end{eqnarray*}}
\newcommand{\bea}{\begin{eqnarray}}
\newcommand{\eea}{\end{eqnarray}}

\numberwithin{equation}{section}
%%%%%%%%%%%%%%%%%%%%%%%%%%%%%%%%%%%%%%%%%%%%%%%%%%%%%%%%%%%%%%%%%%%%%%%%%%%%%%%%%%%%%%%%%%%%%%%%%%%%%%%%%%%%%%%%%
\begin{document}
\title[A note on uniqueness of meromorphic functions and its derivative\\
 %counterpart sharing one or two sets\\
  ]{A note on uniqueness of meromorphic functions  and its derivative sharing two sets }
\date{}
\author[A. Banerjee and B. Chakraborty ]{ Abhijit Banerjee  and Bikash Chakraborty }
\date{}
\address{$^{1}$Department of Mathematics, University of Kalyani, West Bengal 741235, India.}
\email{abanerjee\_kal@yahoo.co.in, abanerjeekal@gmail.com, abanerjee\_kal@rediffmail.com}
\address{$^{2}$Department of Mathematics, University of Kalyani, West Bengal 741235, India.}
\address{$^{2}$Present Address: Department of Mathematics, Ramakrishna Mission Vivekananda
Centenary College, West Bengal, India 700118.}
\email{bikashchakraborty.math@yahoo.com, bikashchakrabortyy@gmail.com}
\maketitle
\let\thefootnote\relax
\footnotetext{2010 Mathematics Subject Classification: 30D35.}
\footnotetext{Key words and phrases: Meromorphic function, URSM, shared sets, weighted sharing.}
\setcounter{footnote}{0}

%%%%%%%%%%%%%%%%%%%%%%%%%%%%%%%%%%%%%%%%%%%%%%%%%%%%%%%%%%%%%%%%%%%%%%%%%%%%%%%%%%%%%%%%%%%%%%%%%%%%%%%%%%%%%%%
\begin{abstract} In this paper, on the basis of a specific question raised in (\cite{3.1}),  we further continue our investigations on the uniqueness of a meromorphic function with its higher derivatives sharing two sets and answered the question affirmatively. We exhibited some examples to show the sharpness of some conditions used in our main result.
\end{abstract}
%%%%%%%%%%%%%%%%%%%%%%%%%%%%%%%%%%%%%%%%%%%%%%%%%%%%%%%%%%%%%%%%%%%%%%%%%%%%%%%%%%%%%%%%%%%%%%%%%%%%%%%%%%%%%%%%%%%%%%%%
\section{Introduction and Definitions}
By $\mathbb{C}$ and $\mathbb{N}$, we mean the set of complex numbers and set of positive integers respectively. We also we assume that readers are familiar with the classical Nevanlinna theory (\cite{8}).\par
In 1976, Gross (\cite{7}) first generalized the concept of value sharing problem by proposing his famous question on set sharing. To understand Gross' contribution elaborately, we require the following definition of set sharing:
\begin{defi}
For a non constant meromorphic function $f$ and any set $S\subset \mathbb{C}\cup\{\infty\}$, we define
$$E_{f}(S)=\bigcup_{a \in S}\{(z,p) \in \mathbb{C}\times\mathbb{N}~ |~ f(z)=a ~with~ multiplicity~ p\},$$
$$\ol{E}_{f}(S)=\bigcup_{a \in S}\{z \in \mathbb{C}~ |~ f(z)=a, counting ~without~ multiplicity\}$$
Two meromorphic functions $f$ and $g$ are said to share the set $S$ counting multiplicities (CM), if $E_{f}(S)=E_{g}(S)$. They are said to share the set $S$ ignoring multiplicities (IM), if $\ol{E}_{f}(S)=\ol{E}_{g}(S).$\par
Thus if $S$ is a singleton set, then the set sharing notion is nothing but the value sharing notion.
\end{defi}
In 1976, Gross (\cite{7}, Question 6) proposed a problem concerning the uniqueness of entire functions that share sets of distinct elements instead of values as follows:\\
{\bf Question A:} {\it Can one find a finite set $S$ such that any two non constant entire functions $f$ and $g$ satisfying $E_{f}(S)=E_{g}(S)$  must be identical ?}\par
In this directions, there are many elegant results in the literature but in the present scenario our prime focus will be on Gross' following question which deal with the two sets sharing problems:\\
{\bf Question B:} {\it Can one find two finite set $S_{j}$ for $j=1,2$  such that any two non constant entire functions $f$ and $g$ satisfying $E_{f}(S_{j})=E_{g}(S_{j})$ for $j=1,2$  must be identical ?}\par
In (\cite{7}), Gross also asked: \enquote{If the answer to \emph{Question 6} is affirmative, it would be interesting
to know how large both sets would have to be.}\par
An affirmative answer of the above question was provided by Yi \cite{13} et al. Since then, shared sets problems have been studied by many authors and a number of profound results have been obtained.\par
Taking the question of Gross (\cite{7}) into the background, the following question is natural:\\
{\bf Question C:} (\cite{13.1}, \cite{13.2}, \cite{12.1}) {\it Can one find two finite sets $S_{j}$ for $j=1,2$ such that any two non constant meromorphic functions $f$ and $g$ satisfying $E_{f}(S_{j})=E_{g}(S_{j})$ for $j=1,2$  must be identical ?}\par
In connection to the above question, a brief survey can be found in (\cite{3.1}). In this context, the possible best result is due to Prof. H. X. Yi (\cite{13.1}).\par In 2002, Yi (\cite{13.1}) proved that there exist two finite sets $S_1$ with one element and $S_2$ with eight elements such that any two non constant meromorphic functions $f$ and $g$ satisfying $E_{f}(S_{j})=E_{g}(S_{j})$ for $j=1,2$ must be identical.\par
In the mean time, Prof. I. Lahiri (\cite{9}) introduced a new notion, namely \enquote{weighted sharing} which is the scaling between  counting multiplicities and ignoring multiplicities. As far as relaxations of the nature of sharing of the sets are concerned, this notion has a remarkable influence.
\begin{defi} (\cite{9}) Let $k\in\mathbb{N}\cup\{0\}\cup\{\infty\}$. For $a\in\mathbb{C}\cup\{\infty\}$, we denote by $E_{k}(a;f)$ the set of all $a$-points of $f$, where an $a$-point of multiplicity $m$ is counted $m$ times if $m\leq k$ and $k+1$ times if $m>k$. If $E_{k}(a;f)=E_{k}(a;g)$, we say that $f$ and $g$ share the value $a$ with weight $k$.\par
Let $S$ be a set of distinct elements of $\mathbb{C}\cup\{\infty\}$. We denote by $E_{f}(S,k)$ the set $E_{f}(S,k)=\bigcup_{a\in S}E_{k}(a;f)$. If $E_{f}(S,k)=E_{g}(S,k)$, then we say that $f$ and $g$ share the set $S$ with weight $k$.
\end{defi}
%%%%%%%%%%%%%%%%%%%%%%%%%%%%%%%%%%%%%%%%%%%%%%%%%%%%%%%%%%%%%%%%%%%%%%%%%%%%%%%%%%%%%%%%%%%%%%%%%%%%%%%%%%%%%%%%%%%%%%%%%%%%%%%%%%%%%%%%%%%%%%%%%%%%%%%%%%%%%%%
In 2008, the present first author (\cite{2.1}) improved the result of Yi (\cite{13.1}) by relaxing the nature of sharing the range sets by the notion of weighted sharing. He established that there exist two finite sets $S_1$ with one element and $S_2$ with eight elements such that any two non constant meromorphic functions $f$ and $g$ satisfying $E_{f}(S_{1},0)=E_{g}(S_{1},0)$ and $E_{f}(S_{2},2)=E_{g}(S_{2},2)$  must be identical.\par
%In this direction the above mentioned result is best possible.\par
But after that no remarkable improvements were done regarding Question C. So the natural query would be whether one can get better result even for particular class of meromorphic functions. This possibility encouraged researchers to find the similar types range sets corresponding to the derivatives of two meromorphic functions. But in this particular direction results are scanty in number.\par
%The purpose of this paper is to deal with the problem.\par
To proceed further, we first recall the existing results in this direction:
\begin{theoA} (\cite{12.1}) Let $S_{1}=\{z:z^{n}+az^{n-1}+b=0\}$ and $S_{2}=\{\infty\}$, where $a$, $b$ are nonzero constants such that $z^{n}+az^{n-1}+b=0$ has no repeated root and $n\;(\geq 7)$, $k$ be two positive integers. Let $f$ and $g$ be two non constant meromorphic functions  such that $E_{f^{(k)}}(S_{1},\infty)=E_{g^{(k)}}(S_{1},\infty)$ and $E_{f}(S_{2},\infty)=E_{g}(S_{2},\infty)$. Then $f^{(k)}\equiv g^{(k)}$.
\end{theoA}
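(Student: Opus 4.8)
The plan is to move everything onto the derivatives at once. Put $F:=f^{(k)}$, $G:=g^{(k)}$ and $P(w):=w^{n}+aw^{n-1}+b=w^{n-1}(w+a)+b$, so that $P$ has $n$ distinct zeros; we may assume $F$ and $G$ are non-constant. First I would record two structural facts. Since $f$ and $g$ share $\infty$ with weight $\infty$ and differentiating $k$ times raises the order of each pole by exactly $k$, the functions $F$ and $G$ share $\infty$ CM, and every pole of $F$ (equivalently of $G$) has order at least $k+1\ge 2$; this thickness of poles will be used twice. Secondly, the hypothesis $E_{f^{(k)}}(S_{1},\infty)=E_{g^{(k)}}(S_{1},\infty)$ says precisely that $P(F)$ and $P(G)$ have the same zeros with the same multiplicities. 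It is then natural to set $\mathcal{F}:=-\tfrac1bF^{n-1}(F+a)=1-\tfrac1bP(F)$ and $\mathcal{G}:=-\tfrac1bG^{n-1}(G+a)=1-\tfrac1bP(G)$: one has $T(r,\mathcal{F})=nT(r,F)+S(r,F)$, the pair $\mathcal{F},\mathcal{G}$ shares the value $1$ with weight $\infty$ (from the set condition) and shares $\infty$ CM (a pole of $F$ of order $p$ being a pole of $\mathcal{F}$ of order $np$), while a zero of $F$ of order $m$ is a zero of $\mathcal{F}$ of order $(n-1)m\ge n-1$ and a zero of $F+a$ of order $l$ is a zero of $\mathcal{F}$ of order $l$ (and symmetrically for $\mathcal{G}$); the zeros of $\mathcal{F}$ and $\mathcal{G}$ need not match, and controlling them is the crux.

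Next I would apply the standard weighted-sharing dichotomy for a pair sharing $1$ with weight $\ge 2$ (the Lahiri--Yi type lemma behind \cite{9}, \cite{13.1}, \cite{2.1}): either (i) $\mathcal{F}\equiv\mathcal{G}$, or (ii) $\mathcal{F}\mathcal{G}\equiv 1$, or (iii) a second main theorem inequality $T(r,\mathcal{F})\le N_{2}(r,0;\mathcal{F})+N_{2}(r,\infty;\mathcal{F})+N_{2}(r,0;\mathcal{G})+N_{2}(r,\infty;\mathcal{G})+S(r)$ holds together with its mirror image. Case (ii) is eliminated at once: $\mathcal{F}\mathcal{G}\equiv 1$ means $F^{n-1}(F+a)\,G^{n-1}(G+a)\equiv b^{2}$, which forces $F$ to be entire and to omit the two distinct values $0$ and $-a$, hence to be constant by Picard's theorem---contrary to our assumption. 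Case (iii) is where the real work lies: one substitutes $T(r,\mathcal{F})=nT(r,F)+S(r,F)$, the reductions $N_{2}(r,0;\mathcal{F})\le 2\overline{N}(r,0;F)+N_{2}(r,-a;F)$ and $N_{2}(r,\infty;\mathcal{F})\le 2\overline{N}(r,\infty;F)\le\tfrac{2}{k+1}N(r,\infty;F)$, the second main theorem for $F$ and for $G$ with the three targets $0,-a,\infty$, and the mirror estimate, and then collects terms. A crude count already gives a bound $n\le c$ with $c$ an absolute constant of modest size; the real point is to sharpen the $N_{2}$-estimates---exploiting the weight-$\infty$ sharing of $\mathcal{F},\mathcal{G}$ at both $1$ and $\infty$, the order-$(n-1)$ thickness of the zeros of $\mathcal{F}$ at zeros of $F$, and the thickness of the poles---so that the threshold is exactly $n=7$. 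I expect this numerical sharpening to be the genuine obstacle; everything around it is routine Nevanlinna theory.

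Once case (iii) is ruled out we are in case (i), i.e. $F^{n-1}(F+a)\equiv G^{n-1}(G+a)$, and I would finish as follows. Put $h:=F/G$ and suppose $F\not\equiv G$. If $h$ were constant, then $G(h^{n}-1)=a(1-h^{n-1})$ (dividing the identity by $G^{n-1}$); if $h^{n}\ne 1$ then $G$, and hence $F$, is constant, contrary to assumption, while if $h^{n}=1$ then also $h^{n-1}=1$, whence $h=1$ and $F\equiv G$, again a contradiction. So $h$ is non-constant and $G=a(1-h^{n-1})/(h^{n}-1)$, $F=hG$. Consequently the poles of $F$ (and of $G$) occur exactly at the points where $h$ assumes one of the $n-1$ values $\zeta$ with $\zeta^{n}=1$, $\zeta\ne 1$ (the factor $h-1$ cancelling against $1-h^{n-1}$); since every pole of $F=f^{(k)}$ has order $\ge k+1\ge 2$, the function $h$ takes each of these $n-1\ (\ge 6)$ distinct values only with multiplicity $\ge 2$, so $\sum_{\zeta}\Theta(\zeta,h)\ge (n-1)/2>2$, contradicting the deficiency relation. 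Hence $F\equiv G$, that is, $f^{(k)}\equiv g^{(k)}$, as claimed.
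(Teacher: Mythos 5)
First, a remark on context: Theorem A is not proved in this paper at all --- it is quoted verbatim from Yi--Lin \cite{12.1} as background, so there is no internal proof to compare against. Your proposal therefore has to stand on its own, and judged that way it has a genuine gap at its center.

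The architecture is the standard one (pass to $\mathcal{F}=1-\tfrac1bP(f^{(k)})$, $\mathcal{G}=1-\tfrac1bP(g^{(k)})$ sharing $1$ CM, run the trichotomy $\mathcal{F}\equiv\mathcal{G}$, $\mathcal{F}\mathcal{G}\equiv1$, or the $N_2$-inequality), and your treatments of the two algebraic cases are fine: the Picard argument killing $\mathcal{F}\mathcal{G}\equiv1$ is correct (CM sharing of poles makes $F,G$ entire, and then $F$ omits $0$ and $-a$), and the endgame from $F^{n-1}(F+a)\equiv G^{n-1}(G+a)$ via $h=F/G$ and the deficiency relation $\sum_{\zeta^n=1,\,\zeta\neq1}\Theta(\zeta,h)\geq(n-1)/2>2$ is sound, correctly exploiting that poles of $g^{(k)}$ have order $\geq k+1\geq2$. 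The problem is case (iii), which you explicitly defer (``I expect this numerical sharpening to be the genuine obstacle''). Carrying out the crude count you sketch gives, per function, $N_2(r,0;\mathcal{F})+N_2(r,\infty;\mathcal{F})\leq 2\overline{N}(r,0;F)+N_2(r,-a;F)+2\overline{N}(r,\infty;F)\leq\left(2+1+\tfrac{2}{k+1}\right)T(r,F)\leq 4\,T(r,F)$, whence $n\bigl(T(r,F)+T(r,G)\bigr)\leq 8\bigl(T(r,F)+T(r,G)\bigr)+S(r)$ and only $n\leq 8$. That contradicts $n\geq 9$, not $n\geq 7$: your argument as written proves a weaker theorem. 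Reaching the threshold $n\geq7$ is exactly where the content of Yi--Lin's result lies, and it is not obtained by ``routine'' tightening of the $N_2$ lemma; the known proofs (and the analogous arguments in Section 3 of the present paper, e.g.\ Lemma 3.6) instead work with the auxiliary function $H=\bigl(\tfrac{F''}{F'}-\tfrac{2F'}{F-1}\bigr)-\bigl(\tfrac{G''}{G'}-\tfrac{2G'}{G-1}\bigr)$, a careful accounting of the $1$-points by multiplicity, and the Second Fundamental Theorem applied to $F$ and $G$ with the targets $0$, $-a\tfrac{n-1}{n}$, $\infty$, before integrating $H\equiv0$. Until that quantitative step is actually executed, the proof is incomplete precisely at the point that makes the theorem nontrivial.
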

In 2010, Banerjee-Bhattacharjee (\cite{3.2}) improved the above results in the following way:
\begin{theoB} (\cite{3.2})
Let $S_{i}$ ($i=1,2$) and $k$ be given as in {\em Theorem A}. Let $f$ and $g$ be two non constant meromorphic functions  such that either $E_{f^{(k)}}(S_{1},2)=E_{g^{(k)}}(S_{1},2)$ and $E_{f}(S_{2},1)=E_{g}(S_{2},1)$ or $E_{f^{(k)}}(S_{1},3)=E_{g^{(k)}}(S_{1},3)$ and $E_{f}(S_{2},0)=E_{g}(S_{2},0)$. Then $f^{(k)}\equiv g^{(k)}$.
\end{theoB}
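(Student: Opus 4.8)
The plan is to reduce this two--set sharing problem to a single value--sharing one for two auxiliary functions built from $P(z)=z^{n}+az^{n-1}+b$, and then to run the weighted--sharing machinery of Lahiri. First I would put $F=f^{(k)}$ and $G=g^{(k)}$ (both may be assumed non-constant, every other case being excluded by Picard's theorem once $n\ge 7$) and set $\mathcal F=-\tfrac1b(F^{n}+aF^{n-1})$, $\mathcal G=-\tfrac1b(G^{n}+aG^{n-1})$, so that $\mathcal F-1=-\tfrac1b P(F)$ and $\mathcal G-1=-\tfrac1b P(G)$. Since $P$ has only simple zeros, the zeros of $\mathcal F-1$ are, with multiplicity, exactly the points at which $F$ assumes a value of $S_{1}$; hence $E_{f^{(k)}}(S_{1},m_{1})=E_{g^{(k)}}(S_{1},m_{1})$, $m_{1}\in\{2,3\}$, says precisely that $\mathcal F$ and $\mathcal G$ share $(1,m_{1})$, while $E_{f}(S_{2},m_{2})=E_{g}(S_{2},m_{2})$, $m_{2}\in\{1,0\}$ with $S_{2}=\{\infty\}$, forces $f$---hence $F$, hence $\mathcal F$---and $g$---hence $G$, hence $\mathcal G$---to have poles at exactly the same points. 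I will use throughout that every pole of $F=f^{(k)}$ has multiplicity $\ge k+1$, so $\ol N(r,\infty;F)\le\tfrac1{k+1}N(r,\infty;F)\le\tfrac1{k+1}T(r,F)+S(r,F)$ and likewise for $G$, and that $T(r,\mathcal F)=nT(r,F)+S(r,F)$, $T(r,\mathcal G)=nT(r,G)+S(r,G)$ by the Valiron--Mokhon'ko theorem.

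Next I would introduce, following \cite{9}, the function
$$H=\left(\frac{\mathcal F''}{\mathcal F'}-\frac{2\mathcal F'}{\mathcal F-1}\right)-\left(\frac{\mathcal G''}{\mathcal G'}-\frac{2\mathcal G'}{\mathcal G-1}\right)$$
and separate the cases $H\not\equiv 0$ and $H\equiv 0$. If $H\not\equiv 0$, then since $\mathcal F$ and $\mathcal G$ share $(1,m_{1})$ with $m_{1}\ge 2$ and share their poles, $H$ has only simple poles, occurring among the zeros of $\mathcal F'$ and $\mathcal G'$ that are neither zeros nor $1$--points, and (when $m_{2}\le 1$) among common poles of unequal multiplicity; estimating $N(r,H)$ and feeding it into the second main theorem yields, as in \cite{3.2} and \cite{9},
$$T(r,\mathcal F)+T(r,\mathcal G)\le N_{2}(r,0;\mathcal F)+N_{2}(r,0;\mathcal G)+N_{2}(r,\infty;\mathcal F)+N_{2}(r,\infty;\mathcal G)+R(r)+S(r),$$
where $R(r)$ is a sum of reduced counting functions produced by the finite weights, bounded by $c\bigl(\ol N(r,\infty;F)+\ol N(r,\infty;G)\bigr)$ for an absolute constant $c$. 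A zero of $\mathcal F$ is a zero of $F$ of multiplicity $\ge n-1>2$ or a zero of $F+a$, so $N_{2}(r,0;\mathcal F)\le 2\ol N(r,0;F)+N(r,-a;F)\le 3T(r,F)+S(r,F)$, while $N_{2}(r,\infty;\mathcal F)\le 2\ol N(r,\infty;F)$, and similarly for $\mathcal G$. Substituting these, using the pole bounds recorded above, and replacing $T(r,\mathcal F)+T(r,\mathcal G)$ by $n(T(r,F)+T(r,G))+S(r)$, I obtain $\bigl(n-3-\tfrac{c'}{k+1}\bigr)\bigl(T(r,F)+T(r,G)\bigr)\le S(r)$ with $c'$ absolute, which is impossible for $n\ge 7$, $k\ge 1$. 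Hence $H\equiv 0$.

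Then, integrating $H\equiv 0$ twice would produce a M\"obius relation $\tfrac1{\mathcal F-1}=\tfrac{A}{\mathcal G-1}+B$ ($A\neq 0$); every resulting subcase other than $\mathcal F\equiv\mathcal G$ and $\mathcal F\mathcal G\equiv 1$ can be eliminated by the same counting argument and $n\ge 7$. If $\mathcal F\mathcal G\equiv 1$, i.e.\ $F^{n-1}(F+a)\,G^{n-1}(G+a)\equiv b^{2}$, then a zero of $F$ would be a pole of $\mathcal G$, hence of $G$, hence of $g$, hence---$f,g$ sharing poles---of $f$, hence of $F$, which is absurd; the same applies to $F+a$, so $F$ omits $0$ and $-a$, and by symmetry so does $G$. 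Then the poles of $\mathcal G$, which lie over the (now nonexistent) zeros of $F$ and $F+a$, are absent, so $G$ omits $0,-a,\infty$ and is constant by Picard's theorem---a contradiction. Hence $\mathcal F\equiv\mathcal G$, i.e.\ $F^{n-1}(F+a)\equiv G^{n-1}(G+a)$. Writing $h=F/G$: if $h$ were non-constant, eliminating $F$ gives
$$G=-a\,\frac{1+h+\cdots+h^{n-2}}{1+h+\cdots+h^{n-1}},$$
so $T(r,G)=(n-1)T(r,h)+S(r,h)$ and $G$ has a pole over each of the $n-1\;(\ge 6)$ distinct zeros of the denominator; the second main theorem for $h$ gives $\ol N(r,\infty;G)\ge(n-3)T(r,h)+S(r,h)$, which against $\ol N(r,\infty;G)\le\tfrac{n-1}{k+1}T(r,h)+S(r)$ is impossible for $n\ge 7$, $k\ge 1$. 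So $h$ is constant, and then $F^{n-1}(F+a)\equiv G^{n-1}(G+a)$ (with $G\not\equiv 0$, $a\neq 0$) forces $h^{n}=h^{n-1}=1$, hence $h=1$ and $f^{(k)}=F\equiv G=g^{(k)}$.

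The real work lies in the $H\not\equiv 0$ analysis and in the enumeration of the M\"obius subcases of $H\equiv 0$, carried out separately for the two weak weight regimes $(S_{1},2)$ with $(S_{2},1)$ and $(S_{1},3)$ with $(S_{2},0)$: one must keep precise track of the reduced counting functions $\ol N_{L}$, $\ol N_{*}$ generated by finite--weight sharing in order to verify that $n=7$ leaves exactly enough room in each subcase, and it is here that the trade--off between the weight on $S_{1}$ and that on $S_{2}$ is exploited. Everything else is routine bookkeeping with the first and second main theorems.
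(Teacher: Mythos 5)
First, a point of reference: the paper does not prove Theorem B at all --- it is quoted from Banerjee--Bhattacharjee \cite{3.2} as background --- so your argument can only be compared with the architecture the paper uses for its own Theorem \ref{thB5}, which is indeed the one you propose: pass to $\mathcal F,\mathcal G$ with $\mathcal F-1=-\tfrac{1}{b}P(F)$, dichotomize on the auxiliary function $H$, and settle the M\"obius cases after integrating $H\equiv 0$. The portions you actually carry out are correct: the reduction $G=-a\,\frac{h^{n-1}-1}{h^{n}-1}$ in the case $\mathcal F\equiv\mathcal G$, the comparison of $\ol N(r,\infty;G)\ge (n-3)T(r,h)+S(r,h)$ (second main theorem for $h$ at the $n-1$ roots of unity $\ne 1$) with $\ol N(r,\infty;G)\le\frac{n-1}{k+1}T(r,h)+S(r,h)$, and the Picard argument excluding $\mathcal F\mathcal G\equiv 1$, which correctly exploits that $f$ and $g$ share $S_{2}=\{\infty\}$ and is necessarily different from the paper's Lemma \ref{bb3}, whose second shared set is $\{0\}$ for the derivatives.

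There is, however, a genuine gap exactly where the content of Theorem B lies. The only improvement of Theorem B over Theorem A is the lowering of the weights to $(2,1)$ or $(3,0)$, and the only place those weights enter is the estimate of $N(r,H)$ and the subsequent bookkeeping when $H\not\equiv 0$. You compress that entire computation into ``$R(r)\le c\,(\ol N(r,\infty;F)+\ol N(r,\infty;G))$ for an absolute constant $c$'' and then assert that $\bigl(n-3-\tfrac{c'}{k+1}\bigr)>0$ for $n\ge 7$; but whether $c'<2(n-3)=8$ holds is precisely what must be verified, separately in the two weight regimes, and it is there that the terms $\ol N_{*}(r,1;\mathcal F,\mathcal G)$ and, in the weight-$0$ regime for $\infty$, the poles of unequal multiplicity must be absorbed with explicit coefficients --- compare the paper's Lemmas \ref{b4}--\ref{today}, whose sole purpose is to manufacture such bounds. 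The same applies to ``every resulting subcase \ldots can be eliminated by the same counting argument'' for the M\"obius relation. As written, the proposal establishes the correct strategy but does not let a reader check that $n=7$ (rather than some larger $n$) suffices. A minor further point: your claim that non-constancy of $f^{(k)},g^{(k)}$ is ``excluded by Picard'' is not accurate (two polynomials of degree $k$ with distinct leading coefficients outside $S_{1}$ satisfy every hypothesis); as throughout this literature, non-constancy of the derivatives must be treated as an implicit standing assumption rather than something derived.
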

In 2011, the same authors (\cite{3.3}) further improved the above results in the following manner:
\begin{theoC} (\cite{3.3})  Let $S_{i}$ ($i=1,2$) and $k$ be given as in {\em Theorem A}. Let $f$ and $g$ be two non constant meromorphic functions such that $E_{f^{(k)}}(S_{1},2)=E_{g^{(k)}}(S_{1},2)$ and $E_{f}(S_{2},0)=E_{g}(S_{2},0)$. Then $f^{(k)}\equiv g^{(k)}$.
\end{theoC}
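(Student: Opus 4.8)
The plan is to carry the now-standard weighted-sharing machinery for unique range sets over to the derivatives $F:=f^{(k)}$, $G:=g^{(k)}$. Writing $P(w)=w^{n}+aw^{n-1}+b$ and putting
\[
\mathcal F=\frac{F^{\,n-1}(F+a)}{-b}=1-\frac{P(F)}{b},\qquad
\mathcal G=\frac{G^{\,n-1}(G+a)}{-b}=1-\frac{P(G)}{b},
\]
one notes that, because $P$ has only simple zeros, $f^{(k)}$ assumes a value of $S_1$ with multiplicity $m$ exactly where $\mathcal F$ has a $1$-point of multiplicity $m$; hence $E_{f^{(k)}}(S_1,2)=E_{g^{(k)}}(S_1,2)$ means precisely that $\mathcal F$ and $\mathcal G$ share $1$ with weight $2$. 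The hypothesis $E_f(S_2,0)=E_g(S_2,0)$ says $f$ and $g$ share $\infty$ IM, and since a pole of $f$ of order $m$ is a pole of $F$ of order $m+k$, it yields $\ol N(r,\infty;F)=\ol N(r,\infty;G)\le\frac{1}{k+1}N(r,\infty;F)$ together with matching control of the common poles of $\mathcal F$ and $\mathcal G$; this ``$\tfrac1{k+1}$-saving'', together with the weighted sharing at $1$, is what keeps the threshold at $n\ge 7$ although $S_2$ is shared only IM. By the Valiron--Mohon'ko theorem $T(r,\mathcal F)=n\,T(r,F)+O(1)$, and likewise for $\mathcal G$.

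The heart of the argument is to show that the Lahiri-type auxiliary function
\[
\mathcal H=\left(\frac{\mathcal F''}{\mathcal F'}-\frac{2\mathcal F'}{\mathcal F-1}\right)-\left(\frac{\mathcal G''}{\mathcal G'}-\frac{2\mathcal G'}{\mathcal G-1}\right)
\]
vanishes identically. Assume not; then by the weight-$2$ hypothesis all but finitely many common $1$-points of $\mathcal F,\mathcal G$ are simple, and each of them is a zero of $\mathcal H$. Combining this with $m(r,\mathcal H)=S(r)$ (lemma on logarithmic derivatives), with the standard bound for $\ol N(r,\infty;\mathcal H)$ in terms of $\ol N_{*}(r,1;\mathcal F,\mathcal G)$, $\ol N_{*}(r,\infty;\mathcal F,\mathcal G)$, the zeros of $F,\,F+a,\,G,\,G+a$ and the ramification of $\mathcal F',\mathcal G'$, and applying the second main theorem to $\mathcal F$ and to $\mathcal G$ at $0,1,\infty$ and adding, one obtains
\[
\Bigl(n-2-\tfrac1{k+1}\Bigr)\{T(r,F)+T(r,G)\}\ \le\ \ol N(r,1;\mathcal F)+\ol N(r,1;\mathcal G)+S(r),
\]
whereas the failure of $\mathcal H$ to vanish, together with the weighted sharing, forces the right-hand side to be at most $C\{T(r,F)+T(r,G)\}+S(r)$ with $C$ bounded independently of $n$ and $k$; for $n\ge 7$ the two estimates are incompatible. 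Making the second bound tight enough --- so that the constant $C$ really stays below $n-2-\tfrac1{k+1}$ --- is the one genuinely delicate part and the step I expect to be the main obstacle.

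Granting $\mathcal H\equiv0$, two integrations give $\dfrac1{\mathcal F-1}=\dfrac{A}{\mathcal G-1}+B$ with constants $A\ (\ne0)$ and $B$, equivalently $P(G)=\dfrac{Ab\,P(F)}{\,b+B\,P(F)\,}$. If $B\ne0$, then $P(G)$ is infinite wherever $P(F)=-b/B$: if $-b/B\ne b$ the polynomial $P(w)+b/B$ has at least $n-1$ distinct roots, and $F$ must omit every one of them --- otherwise $G$, hence $g$, hence $f$ (by the shared poles of $f,g$), hence $F$ would have a pole at a point where $F$ is finite, contradicting the second main theorem since $n\ge7$ --- while if $-b/B=b$ one finds instead that $F$ omits $0$ and $-a$, which forces $F$ and $G$ to be entire and then $F$ constant. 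Hence $B=0$, i.e.\ $P(F)\equiv\tfrac1A P(G)$. If moreover $A\ne1$, then at every zero of $G$ the function $F$ takes one of the $\ge n-1$ roots $\gamma_1,\gamma_2,\dots$ of $P(w)=b/A$, so $\ol N(r,\{\gamma_j\};F)=\ol N(r,0;G)+\ol N(r,-a;G)\le 2\,T(r,G)+O(1)$, and the second main theorem applied to $F$ at the $\gamma_j$ and at $\infty$ gives $\bigl(n-4-\tfrac1{k+1}\bigr)T(r,F)\le S(r,F)$, impossible for $n\ge7$. Therefore $A=1$, that is $P(F)\equiv P(G)$, i.e.\ $F^{\,n-1}(F+a)\equiv G^{\,n-1}(G+a)$.

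Finally, this identity already forces $F$ and $G$ to have the same poles with the same multiplicities, so $h:=F/G$ is entire; a short computation gives $G=\dfrac{a\,(1-h^{\,n-1})}{h^{\,n}-1}$, and if $h^{\,n}\equiv1$ then $h$ is a constant with $h^{\,n}=h^{\,n-1}=1$, hence $h\equiv1$. Otherwise, since $G=g^{(k)}$, it has a pole of order $\ge k+1$ at each $\omega$-point of $h$ with $\omega^{n}=1$ and $\omega\ne1$, and as that order equals the multiplicity of the $\omega$-point, every such $\omega$-point of $h$ has multiplicity $\ge k+1\ge2$; applying the second main theorem to $h$ at the $n-1$ values $\omega\ne1$ then gives $(n-2)T(r,h)\le\tfrac{n-1}{2}T(r,h)+S(r,h)$, impossible for $n\ge7$ unless $h$ is constant, and a constant $h\ne1$ would make $g^{(k)}$ constant. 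In every case we arrive at $h\equiv1$, that is $f^{(k)}=F\equiv G=g^{(k)}$, as desired. Throughout, the few degenerate situations in which $F$ or $G$ is rational --- in particular constant --- are settled by separate elementary arguments.
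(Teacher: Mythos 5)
This statement is quoted from Banerjee--Bhattacharjee (Math.\ Slovaca, 2011); the present paper states it as background and contains no proof of it, so the only meaningful comparison is with the analogous machinery the paper develops for its own Theorem 2.1 (Lemmas \ref{biku1} and \ref{biku}). Your outline follows exactly that standard architecture: translate the set-sharing into $\mathcal F,\mathcal G$ sharing $1$ with weight $2$, exploit the fact that poles of $f^{(k)}$ have multiplicity $\geq k+1$ to get the $\tfrac1{k+1}$-saving on $\ol{N}(r,\infty;\cdot)$, kill the Lahiri function $\mathcal H$, integrate to a M\"obius relation, and dispose of the degenerate cases. Your endgame is essentially sound: the case analysis on $B\neq0$, $A\neq1$ is correct in substance (modulo the inaccurate aside that $F$ omitting $0$ and $-a$ ``forces $F$ entire''---what it actually yields is a direct contradiction via the second main theorem and $\ol{N}(r,\infty;F)\leq\tfrac12 T(r,F)$), and the final reduction via $h=F/G$ and $G=a(1-h^{n-1})/(h^n-1)$, using that all poles of $g^{(k)}$ are multiple, correctly closes for $n\geq7$ (note $h$ need not be entire, but that is harmless).

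The genuine gap is the central step, which you yourself flag as ``the main obstacle'': the quantitative second-main-theorem estimate showing $\mathcal H\equiv0$. This is not a routine detail---it is precisely where the theorem's content lives, since the admissible weight ($2$ at the set $S_1$, $0$ at $\infty$) and the threshold $n\geq7$ are determined by whether that inequality closes. Moreover, your description of how it would close is not correct as stated: the right-hand side $\ol{N}(r,1;\mathcal F)+\ol{N}(r,1;\mathcal G)$ is controlled by $\tfrac12\{N(r,1;\mathcal F)+N(r,1;\mathcal G)\}\leq\tfrac n2\{T(r,F)+T(r,G)\}$ plus correction terms in $\ol{N}_*(r,1;\mathcal F,\mathcal G)$ and $\ol{N}_*(r,\infty;\mathcal F,\mathcal G)$, so the constant $C$ is \emph{not} bounded independently of $n$; the contradiction comes from comparing $n-2-\tfrac1{k+1}$ against $\tfrac n2+(\text{corrections})$, and one must verify that the corrections---in particular the $\ol{N}_*$ terms arising from the weight-$2$ sharing at $1$ and the merely IM sharing at $\infty$---are small enough that $\tfrac n2>2+\tfrac1{k+1}+(\text{corrections})$ holds for $n\geq7$. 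This is exactly the kind of careful bookkeeping carried out in Lemma \ref{biku1} of the present paper (via Lemmas \ref{b4}, \ref{b7} and \ref{today}) for its own hypotheses, and without the analogous computation for the hypotheses of Theorem C your argument is incomplete at its decisive point.
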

Recently the present authors (\cite{3.1}) improved the above results at the cost of considering a new range sets instead of the previous. To discuss the results in (\cite{3.1}), we first require the following polynomial.\par
%%%%%%%%%%%%%%%%%%%%%%%%%%%%%%%%%%%%%%%%%%%%%%%%%%%%%
Suppose for two positive integers $m$, $n$
\bea\label{1abcp1}
P_{\ast}(z)=z^{n}-\frac{2n}{n-m}z^{n-m}+\frac{n}{n-2m}z^{n-2m}+c,
\eea
where $c$ is any complex number satisfying $|c|\not=\frac{2m^2}{(n-m)(n-2m)}$ and $c\not=0,-\frac{1-\frac{2n}{n-m}+\frac{n}{n-2m}}{2}$.\par
\begin{theoD}\label{sm1.4}(\cite{3.1}) Let $n(\geq 6),~m(=1)$ and $k\geq 1$ be three positive integers. Let $S_{\ast}=\{z :P_{\ast}(z)=0\}$ where the polynomial $P_{\ast}(z)$ is defined by  (\ref{1abcp1}). Let $f$ and $g$ be two non constant meromorphic functions satisfying $E_{f^{(k)}}(S_{\ast},3)=E_{g^{(k)}}(S_{\ast},3)$ and $E_{f^{(k)}}(0,0)=E_{g^{(k)}}(0,0)$. Then $f^{(k)} \equiv g^{(k)}$.
\end{theoD}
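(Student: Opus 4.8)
The plan is to put $F:=f^{(k)}$, $G:=g^{(k)}$ and to prove the purely function-theoretic statement that two non-constant meromorphic functions satisfying $E_{F}(S_{\ast},3)=E_{G}(S_{\ast},3)$ and $E_{F}(0,0)=E_{G}(0,0)$ must coincide; the order $k$ plays no further role. First I would record the algebraic features of $P_{\ast}$: with $m=1$,
\[
P_{\ast}'(z)=nz^{n-3}(z-1)^{2},\qquad P_{\ast}(z)-c=z^{n-2}Q(z),\quad Q(z)=z^{2}-\tfrac{2n}{n-1}z+\tfrac{n}{n-2},
\]
so $P_{\ast}$ has exactly the two critical points $0$ and $1$, with critical values $c$ and $c+\tfrac{2}{(n-1)(n-2)}$, and $Q$ has two distinct non-zero roots $\beta_{1},\beta_{2}$. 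The assumptions $c\neq0$, $|c|\neq\tfrac{2}{(n-1)(n-2)}$ guarantee that $P_{\ast}$ has $n$ simple zeros and that $0,1\notin S_{\ast}$; the remaining restriction $c\neq-\tfrac{1}{(n-1)(n-2)}$ is kept in reserve for the case analysis.

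Introduce $\mathcal{F}=\dfrac{c-P_{\ast}(F)}{c}$, $\mathcal{G}=\dfrac{c-P_{\ast}(G)}{c}$. Since the $a_{i}\in S_{\ast}$ are simple zeros of $P_{\ast}$, the set-sharing hypothesis says exactly that $\mathcal{F}$ and $\mathcal{G}$ share $1$ with weight $3$; moreover $\mathcal{F}-1=-P_{\ast}(F)/c$, the zeros of $\mathcal{F}$ are the points where $F\in\{0,\beta_{1},\beta_{2}\}$, the poles of $\mathcal{F}$ are the poles of $F$, $T(r,\mathcal{F})=n\,T(r,F)+O(1)$, and by the $0$-sharing the ``$F=0$'' parts of the zero divisors of $\mathcal{F}$ and $\mathcal{G}$ agree; thus every counting function of $\mathcal{F}$ except $\ol{N}(r,1;\mathcal{F})$ is $O(T(r,F))$ with small constants. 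Now run the standard weighted-sharing argument on
\[
H=\left(\frac{\mathcal{F}''}{\mathcal{F}'}-\frac{2\mathcal{F}'}{\mathcal{F}-1}\right)-\left(\frac{\mathcal{G}''}{\mathcal{G}'}-\frac{2\mathcal{G}'}{\mathcal{G}-1}\right).
\]
If $H\not\equiv0$, the Lahiri-type bound for the $1$-points under weight-$3$ sharing, together with the second main theorem for $\mathcal{F}$ and for $\mathcal{G}$ and the fact that $P_{\ast}-c$ has an $(n-2)$-fold zero at $0$ and $P_{\ast}-P_{\ast}(1)$ a triple zero at $1$ (so the reduced counting and ramification terms are far smaller than the full ones), yields $(n-C)\{T(r,F)+T(r,G)\}\le S(r,F)+S(r,G)$ with a constant $C\le5$, which is impossible for $n\ge6$. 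Hence $H\equiv0$.

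When $H\equiv0$, integrating twice gives a M\"obius relation $\dfrac{1}{\mathcal{F}-1}=\dfrac{A}{\mathcal{G}-1}+B$, i.e. $\dfrac{c}{P_{\ast}(F)}=\dfrac{Ac}{P_{\ast}(G)}+D$ with $A\neq0$ and $D=-B$. If $D\neq0$ then $P_{\ast}(G)$ is a non-trivial M\"obius transform of $P_{\ast}(F)$, and comparing the polar divisors of the two sides of $D\,P_{\ast}(F)P_{\ast}(G)=c\,P_{\ast}(G)-Ac\,P_{\ast}(F)$ with the second main theorem applied to $F$ and $G$ through the $n$ points of $S_{\ast}$ and through $0,\beta_{1},\beta_{2}$ forces a contradiction with $n\ge6$ (the excluded values of $c$ being precisely the borderline cases). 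If $D=0$ and $A\neq1$ then $P_{\ast}(G)\equiv AP_{\ast}(F)$; evaluating at a point where $F=0$ (so $G=0$) gives $c=Ac$, impossible once $F$ has a zero, and if $F$ is zero-free, $F$ omits $0$ and a further second-main-theorem count over the $F$-preimages of the roots of $P_{\ast}$ and of $P_{\ast}-c$ again contradicts $n\ge6$ (this is where $c\neq-\tfrac{1}{(n-1)(n-2)}$ is used). Therefore $A=1$, $D=0$, i.e. $P_{\ast}(F)\equiv P_{\ast}(G)$. Matching zero orders then shows $F,G$ share $0$ counting multiplicities, and writing $t=F/G$ and substituting $F=tG$ gives, after division by $G^{n-2}$,
\[
(t^{n}-1)G^{2}-\tfrac{2n}{n-1}(t^{n-1}-1)G+\tfrac{n}{n-2}(t^{n-2}-1)\equiv0;
\]
if $t$ is constant this forces $t^{n}=t^{n-1}=t^{n-2}=1$, hence $t=1$ and $F\equiv G$, while if $t$ were non-constant the relation exhibits $G$ as a root of a quadratic over $\mathbb{C}[t]$, and then $t$ can attain each $n$-th root of unity $\zeta\neq1$ only where $G$ equals a fixed constant $c_{\zeta}$, contradicting the second main theorem for $t$ through those $n-1$ values when $n\ge6$. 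This gives $f^{(k)}\equiv g^{(k)}$.

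The step I expect to be the main obstacle is making the $H\not\equiv0$ estimate tight enough to bring the threshold down to $n\ge6$ (rather than $n\ge7$ as in Theorems A--C): that is exactly the place where the special critical structure $P_{\ast}'(z)=nz^{n-3}(z-1)^{2}$ must be fed into the reduced counting functions, and it requires delicate bookkeeping of multiplicities at the $S_{\ast}$-points, at the zeros of $F$ and $G$, and at their poles. Running the exceptional M\"obius cases in the $H\equiv0$ branch — where all three restrictions on $c$ get consumed — is the second point that needs care.
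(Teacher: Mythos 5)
First, a point of reference: this paper does not actually prove Theorem D --- it is quoted verbatim from the authors' earlier work \cite{3.1}, and the only proof machinery developed here is for Theorem 2.1, which concerns the Yi polynomial $z^{n}+az^{n-1}+b$ rather than $P_{\ast}$. Measured against that template, your plan is the right one and follows it faithfully: normalizing $P_{\ast}(f^{(k)})$ so that the set-sharing becomes weighted sharing of the value $1$, introducing the auxiliary function $H$, splitting into $H\not\equiv0$ (second main theorem plus a Lahiri-type inequality for the $1$-points) and $H\equiv0$ (M\"obius relation and case analysis). Your algebra for $P_{\ast}$ with $m=1$ --- $P_{\ast}'(z)=nz^{n-3}(z-1)^{2}$, the two critical values, and the roles of the three restrictions on $c$ --- is all correct.

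The genuine gap is that every quantitatively decisive step is asserted rather than carried out. In the $H\not\equiv0$ branch you write that the bookkeeping \enquote{yields $(n-C)\{T(r,F)+T(r,G)\}\le S(r)$ with $C\le 5$}, but $C\le 5$ is precisely the content of the theorem (Theorems A--C need $n\ge 7$ under closely related hypotheses), and you concede in your closing paragraph that you have not done the multiplicity bookkeeping that would establish it; the threshold really does depend on feeding the weight $3$ on $S_{\ast}$, the weight $0$ on the value $0$, and the factorization $P_{\ast}-c=z^{n-2}Q(z)$ into the estimates, exactly as the paper's Lemmas 3.3--3.6 do for its own polynomial. The same applies to the two M\"obius subcases (\enquote{comparing the polar divisors \dots forces a contradiction with $n\ge6$}) and to the final step: your treatment of non-constant $t=F/G$ --- that $t$ attains each $n$-th root of unity $\zeta\ne1$ only where $G$ equals a constant $c_{\zeta}$, \enquote{contradicting the second main theorem for $t$} --- does not close as stated, since $\ol{N}(r,\zeta;t)\le\ol{N}(r,c_{\zeta};G)$ yields nothing without first relating $T(r,t)$ to $T(r,G)$; what is actually needed there is Fujimoto's criterion for critically injective polynomials, $q_{1}q_{2}>q_{1}+q_{2}$ with $(q_{1},q_{2})=(n-3,2)$, which is exactly where $n\ge6$ enters. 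One smaller inaccuracy: $E_{f^{(k)}}(0,0)=E_{g^{(k)}}(0,0)$ is IM sharing, so only the supports of the \enquote{$F=0$} parts of the zero divisors of $\mathcal{F}$ and $\mathcal{G}$ coincide, not the divisors themselves as you claim; only the reduced counting functions may be identified. In sum, the proposal is a correct road map in the style of the paper, but the load-bearing computations are missing.
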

In (\cite{3.1}), the following question was asked:
\begin{question}
 \emph{Whether there exists two suitable sets $S_{1}$ (with one element) and $S_{2}$ (with five elements) such that when derivatives of any two non constant meromorphic functions share them with finite weight yield $f^{(k)}\equiv g^{(k)}$ ?}
\end{question}
To answer the above question affirmatively is the main aim of writing this paper.
%%%%%%%%%%%%%%%%%%%%%%%%%%%%%%%%%%%%%%%%%%%%%%%%%%%%%%%%%%%%%%%%%%%%%%%%%%%%%%%%%%%%%%%%%%%%%%%%%%%%%%%%%%%%%%%%%%%%%%%%%%%%%%%%%%%%%%%%%%%%%%%%%%%%%%%%%%%%%
To this end, we recall some definitions which we need in this sequel.
 \begin{defi} (\cite{3}) Let $z_{0}$ be a zero of $f-a$ and $g-a$ of multiplicity $p$ and $q$ respectively.\par
Then $N^{1)}_{E}(r,a;f)$ and $\ol N^{(2}_{E}(r,a;f)$ denote the reduced counting functions of those $a$-points of $f$ and $g$ where $p=q=1$ and $p=q\geq 2$ respectively.\par
Also $\ol N_{L}(r,a;f)$ and $\ol N_{L}(r,a;g)$ denote the reduced counting functions of those $a$-points of $f$ and $g$ where $p>q\geq 1$ and $q>p\geq 1$ respectively.\par
Thus clearly $N^{1)}_{E}(r,a;f)= N^{1)}_{E}(r,a;g)$ and $\ol N^{(2}_{E}(r,a;f)=\ol N^{(2}_{E}(r,a;g).$
\end{defi}
\begin{defi} (\cite{3}) Let $f$ and $g$ share a value $a$-IM. We denote by $\ol N_{*}(r,a;f,g)$ the reduced counting function of those $a$-points of $f$ whose multiplicities differ from the multiplicities of the corresponding $a$-points of $g$, i.e, $\ol N_{*}(r,a;f,g)=\ol N_{L}(r,a;f)+\ol N_{L}(r,a;g)$.
\end{defi}
%\begin{defi}(\cite{3})
%Let $k\in \mathbb{N}\cup\{0\}\cup\{\infty\}$, $a\in\mathbb{C}\cup\{\infty\}$ and $f$ be a non constant meromorphic function.\par By %$\ol{N}_{k)}(r,a;f)\big(\ol{N}_{(k}(r,a;f)\big)$ we mean the counting function of those $a$-points of $f$ whose multiplicities are not greater(less) than $k$, %where each $a-$point is counted only once.\par
%Also $\ol{N}_{p}(r,a;f)$ denotes the counting function of those $a$-points of $f$ , where an $a$-point of $f$ with multiplicity $m$ counted $m$ times if $m\leq %p$ and $p$ times if $m > p$.
%\end{defi}
%%%%%%%%%%%%%%%%%%%%%%%%%%%%%%%%%%%%%%%%%%%%%%%%%%%%%%%%%%%%%%%%%%%%%%%%%%%%%%%%%%%%%%%%%%%%%%%%%%%%%%%%%%%%%%%%%%%%%%%%%%%%%%%%%%%%%%%%%%%%%%%%%%%%%%%%%%%
\section{Main Result}
For a positive integer $n$, we shall denote by $P(z)$ the following polynomial (\cite{13}):
\bea\label{abcp1}
P(z)=z^{n}+az^{n-1}+b~~~\text{where}~~ab\not=0~\text{and}~\frac{b}{a^{n}}\not=(-1)^{n}\frac{(n-1)^{(n-1)}}{n^{n}}.
\eea
%where $a,~b$ are any nonzero complex numbers such that $P(z)$ has no multiple root.\par
%Following theorems are the main results of the paper.
\begin{theo}\label{thB5}
Let $n(\geq 5)$ and $k(\geq 1)$ be two positive integers. Let $S=\{z :P(z)=0\}$ where the polynomial $P(z)$ is defined by  (\ref{abcp1}). Let $f$ and $g$ be two non constant meromorphic functions satisfying $E_{f^{(k)}}(S,2)=E_{g^{(k)}}(S,2)$ and $E_{f^{(k)}}(0,1)=E_{g^{(k)}}(0,1)$. Then $f^{(k)}\equiv g^{(k)}.$
\end{theo}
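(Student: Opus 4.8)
The plan is to set $F=\dfrac{(f^{(k)})^n+a(f^{(k)})^{n-1}}{-b}$ and $G=\dfrac{(g^{(k)})^n+a(g^{(k)})^{n-1}}{-b}$, so that the hypothesis $E_{f^{(k)}}(S,2)=E_{g^{(k)}}(S,2)$ translates into $F$ and $G$ sharing the value $1$ with weight $2$, while $E_{f^{(k)}}(0,1)=E_{g^{(k)}}(0,1)$ gives that $f^{(k)}$ and $g^{(k)}$ share $0$ with weight $1$. Writing $F-1=\dfrac{-1}{b}(f^{(k)})^{n-1}\bigl(f^{(k)}-\alpha_1\bigr)\bigl(f^{(k)}-\alpha_2\bigr)$ where $\alpha_1,\alpha_2$ are the (distinct, by the condition on $b/a^n$) nonzero roots of $z^2+az+b=0$ wait, more precisely of $z^{n-1}\cdot(\text{quadratic-type})$; the key structural fact is that $P(z)=z^{n}+az^{n-1}+b$ has only simple zeros and $P'(z)=z^{n-2}\bigl(nz+a(n-1)\bigr)$, so $F'=\dfrac{-1}{b}(f^{(k)})^{n-2}\bigl(nf^{(k)}+a(n-1)\bigr)(f^{(k)})'$. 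This factorization is what lets one run the standard Nevanlinna machinery.

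Next I would apply the second fundamental theorem and the standard lemmas on weighted sharing (Lahiri's lemmas, as catalogued in \cite{9} and used in Theorems A--D) to control $\ol N_*(r,1;F,G)$ and the ramification terms. The heavy zero $(f^{(k)})^{n-1}$ (and the pole contributions) force $n$ to be large enough that $F$ and $G$ cannot be \emph{both} ``large'' independently; concretely one derives an inequality of the shape $T(r,F)\le \bigl(\text{small multiple}\bigr)\,\{T(r,F)+T(r,G)\}+S(r)$ that is violated unless one of the rigid conclusions of the Lahiri-type alternative holds, namely either $FG\equiv 1$ or $F\equiv G$. The threshold $n\ge 5$ together with weight $2$ on $S$ and weight $1$ on $\{0\}$ is exactly what is needed to kill the ``bad'' terms; this counting estimate, and the bookkeeping of how the shared $0$ of $f^{(k)},g^{(k)}$ interacts with the zero of order $n-1$ in $F-1$, is where the real work lies and is the main obstacle.

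It then remains to rule out $FG\equiv 1$, i.e. $\bigl((f^{(k)})^n+a(f^{(k)})^{n-1}\bigr)\bigl((g^{(k)})^n+a(g^{(k)})^{n-1}\bigr)\equiv b^2$. Here I would use the shared value $0$ of $f^{(k)}$ and $g^{(k)}$ (with weight $1$): a zero of $f^{(k)}$ of multiplicity $p$ would force, via $FG\equiv 1$, a pole of $g^{(k)}$ of the appropriate order, and tracking multiplicities on both sides together with $n\ge 5$ leads to a contradiction with the second fundamental theorem applied to $f^{(k)}$ (the three ``values'' $0$, $-a$, $\infty$ become totally ramified in an impossible way, or a Picard-type deficiency relation fails). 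This is the same mechanism that disposes of the analogous case in Theorem D; the presence of the extra nonzero root $-a(n-1)/n$ of $P'$ is harmless because it only contributes to $\ol N(r,0;F')$ already absorbed in $S(r)$-type terms relative to $T(r,f^{(k)})$.

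Once $FG\equiv 1$ is excluded we have $F\equiv G$, i.e. $(f^{(k)})^n+a(f^{(k)})^{n-1}\equiv (g^{(k)})^n+a(g^{(k)})^{n-1}$. Setting $h=f^{(k)}/g^{(k)}$ and substituting, one gets $(g^{(k)})^{n-1}\bigl(h^{n-1}(h g^{(k)}+a)-(g^{(k)}+a)\bigr)\equiv 0$; if $h$ is nonconstant this again produces, through the zeros and poles of $g^{(k)}$ and the shared $0$, more ramification than $g^{(k)}$ can support when $n\ge 5$, forcing $h$ constant, and then $h^{n}=h^{n-1}=1$ gives $h\equiv 1$, i.e. $f^{(k)}\equiv g^{(k)}$. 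I expect the bulk of the paper's proof to be the precise inequalities in the second paragraph — assembling the counting functions for $N(r,1;F\mid=1)$, $\ol N_L$, the poles, and the zeros of order $n-1$ — so in a full write-up I would first state the needed weighted-sharing lemmas verbatim from \cite{9,3}, then do that estimate, then the two short eliminations above.
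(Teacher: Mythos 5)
Your overall strategy coincides with the paper's: the same auxiliary functions $F$ and $G$ (your normalization $F=\frac{(f^{(k)})^{n}+a(f^{(k)})^{n-1}}{-b}$ is the intended one, so that $F=1$ exactly at the $S$-points of $f^{(k)}$), the reduction via the second fundamental theorem and weighted-sharing estimates to the rigid alternative $F\equiv G$ or $FG\equiv 1$, the exclusion of $FG\equiv 1$ by playing the shared zeros of $f^{(k)},g^{(k)}$ against the poles that identity forces (the paper's Lemma 3.2), and the passage from $F\equiv G$ to $f^{(k)}\equiv g^{(k)}$ via $h=f^{(k)}/g^{(k)}$ (Lemma 3.1). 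One structural caveat: integrating $H\equiv 0$ yields only a M\"obius relation $F\equiv\frac{AG+B}{CG+D}$, and the paper's Lemma 3.7 must still dispose of the cases $AC\neq 0$, and $A=0$ with $\gamma\neq 1$, and $C=0$ with $\lambda\neq 1$ by three further applications of the second fundamental theorem before only $F\equiv G$ and $FG\equiv 1$ survive; your appeal to a packaged \emph{Lahiri-type alternative} conceals these routine but necessary eliminations.

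The genuine gap is in your second paragraph. The entire content of the theorem --- that $n=5$ with weight $2$ on $S$ and weight $1$ on $\{0\}$ suffices --- lives precisely in the inequality you defer as ``where the real work lies.'' In the paper this is Lemma 3.6, which rests on two auxiliary bounds (Lemma 3.4 controlling $\ol N(r,0;f^{(k)}\mid\geq q+1)$ and Lemma 3.5 controlling $\ol N_{*}(r,1;F,G)$, both in terms of $\ol N(r,\infty;f^{(k)})+\ol N(r,\infty;g^{(k)})$) and, decisively, on the fact that every pole of a $k$-th derivative has multiplicity at least $k+1\geq 2$, so that $\ol N(r,\infty;f^{(k)})\leq\frac{1}{2}N(r,\infty;f^{(k)})\leq\frac{1}{2}T(r,f^{(k)})+O(1)$. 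Your sketch of the main estimate never invokes this halving of the pole-counting function; but without it the count cannot close at $n=5$. This is not a technicality: the paper's Example 2.1 exhibits functions $f,g$ themselves (the case $k=0$) sharing the same two sets with $f\not\equiv g$, so any execution of your second paragraph that uses the derivative hypothesis only in the endgame, rather than inside the main inequality, must fail. The plan is the right one, but the step you flag as the obstacle is essentially the whole theorem, and the key mechanism for overcoming it is absent from the sketch.
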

%\begin{cor}\label{thB51}
%Let $n(\geq 3)$ and $k(\geq 1)$ be two positive integers. Let $S=\{z :P(z)=0\}$ where the polynomial $P(z)$ is defined by  (\ref{abcp1}). Let $f$ and $g$ be two non constant entire functions satisfying $E_{f^{(k)}}(S,2)=E_{g^{(k)}}(S,2)$ and $E_{f^{(k)}}(0,0)=E_{g^{(k)}}(0,0)$. Then $f^{(k)}\equiv g^{(k)}.$
%\end{cor}
\begin{rem} Theorem \ref{thB5} shows that there exists two sets $S_{1}$ with one element and $S_{2}$ with five elements such that when derivatives of any two non constant meromorphic functions share them with finite weight yields $f^{(k)}\equiv g^{(k)}$. Thus the above theorem improves {\it Theorem D} in the direction of {\it Question 1.1}.
\end{rem}
As Theorem \ref{thB5} deals with specific class of meromorphic functions, so the general curiosity will be:
\begin{question} Does Theorem \ref{thB5} hold good for general class of meromorphic functions?
\end{question}
The following example shows that the answer is negative, i.e., $k\geq1$ is sharp.
\begin{exm}
Let $n \in \mathbb{N}$ and $S=\{z :P(z)=0\}$ where  $P(z)$ is defined by (\ref{abcp1}). We choose $f(z)=-a\frac{1-h^{n-1}}{1-h^{n}}$ and, $g(z)=h(z)f(z)$, where $h(z)=\frac{e^{z}-1+sgn(n-5)}{e^{z}+1}$ and $sgn(x)$ is defined as:
$$ sgn(x)=\bigg\{ \begin{array}{c}
+1~\text{for}~ x\geq0,\\
-1~\text{for}~x<0.
\end{array}
$$
For $n \in \mathbb{N}$, $\left(\frac{f}{g}\right)^{n-1}\left(\frac{f+a}{g+a}\right)=\frac{1}{h^{n-1}}\bigg\{\frac{a\frac{h^{n-1}-h^{n}}{1-h^{n}}}{a\frac{1-h}{1-h^{n}}}\bigg\}=1$. Thus  $E_{f}(S,\infty)=E_{g}(S,\infty)$.\par
Again
%$$ h(z)=\bigg\{ \begin{array}{c}
%\frac{e^{z}}{e^{z}+1}~\text{for}~ n\geq5\\
%\frac{e^{z}-2}{e^{z}+1}~\text{for}~n<5.
%\end{array}
%$$
from the construction of $h$, it is clear that $0$ is a Picard exceptional value of $h$ only when  $n\geq 5$. So $g=hf$ implies $E_{f}(0,\infty)=E_{g}(0,\infty)$ only when $n\geq 5$. Here $f$ and $g$ satisfies all the conditions stated in Theorem \ref{thB5}, but $f\not\equiv g$.
\end{exm}
%Next result (Theorem 4 of \cite{13}, page no.-348) shows that in Corollary \ref{thB51} $k\geq1$ is not sharp.
%\begin{exm}(\cite{13})
%Let $n (\geq3)$ be a positive integer and $S=\{z :P(z)=0\}$ where  $P(z)$ is defined by  \ref{abcp1}. If $f$ and $g$ are two entire functions satisfying  $E_{f}(S,\infty)=E_{g}(S,\infty)$ and $E_{f}(0,0)=E_{g}(0,0)$, then $f\equiv g$.
%\end{exm}
Obviously the next natural query would be:
\begin{question}
Whether the set $S$ can be replaced by any arbitrary set of five elements in the same environment of Theorem \ref{thB5} ?
\end{question}
The following example shows that the answer is negative.
\begin{exm}
Let $f(z) =\frac{1}{(\sqrt{\alpha \beta \gamma })^{k-1}}e^{\sqrt{\alpha \beta \gamma}\;z}$ and, $g(z)=\frac{(-1)^{k}}{(\sqrt{\alpha \beta \gamma})^{k-1}}e^{-\sqrt{\alpha \beta \gamma}\;z}$ ($k\geq 1$) and \linebreak $S=\{\alpha \sqrt{\beta },\alpha \sqrt{\gamma},\beta \sqrt{\gamma },\gamma \sqrt{\beta },\sqrt{\alpha\beta\gamma}\}$, where $\alpha $, $\beta $ and $\gamma $ are three nonzero distinct complex numbers. Clearly $E_{f^{(k)}}(S,\infty)=E_{g^{(k)}}(S,\infty)$ and $E_{f^{(k)}}(0,\infty)=E_{g^{(k)}}(0,\infty)$, but $f^{(k)}\not\equiv g^{(k)}$.
\end{exm}
Also the next two examples show that $ab\not=0$ is necessary in Theorem \ref{thB5}.
\begin{exm}
If $b=0$, then $S=\{0,-a\}$. Now let $f(z) =ae^{z}$ and, $g(z)=(-1)^{k}ae^{-z}$ ($k\geq 1$). Clearly $E_{f^{(k)}}(S,\infty)=E_{g^{(k)}}(S,\infty)$ and $E_{f^{(k)}}(0,\infty)=E_{g^{(k)}}(0,\infty)$, but $f^{(k)}\not\equiv g^{(k)}$.
\end{exm}
\begin{exm}
If $a=0$, then $S=\{z|z^{n}+b=0\}$ where $n\geq5$. Now let $f(z) =\lambda e^{z}$ and, $g(z)=(-1)^{k}\lambda e^{-z}$ ($k\geq 1$) where $\lambda$ is one of the value of $(-b)^{\frac{1}{n}}$. Clearly $E_{f^{(k)}}(S,\infty)=E_{g^{(k)}}(S,\infty)$ and $E_{f^{(k)}}(0,\infty)=E_{g^{(k)}}(0,\infty)$, but $f^{(k)}\not\equiv g^{(k)}$.
\end{exm}
%With respect to Question 1.1, Theorem \ref{thB5} means that $\sharp(S)\leq 5$ where $\sharp(S)$ denotes the cardinality of the set $S$. Now the following example %shows that the lower bound of $\sharp(S)$.\par
The following example shows that when derivative of two meromorphic functions share two sets, if cardinality of one set is one, then cardinality of another set is atleast three.
\begin{exm}\label{abc2015.}%(\cite{3.1})
Let $S_{1}=\{a\}$ and $S_{2}=\{b,c\}$ $(a\not=b,a\not=c)$. Choose $f(z) =p(z)+(b-a)e^{z}$ and, $g(z)=q(z)+(-1)^{k}(c-a)e^{-z}$, where $p(z)=\sum\limits_{i=0}^{k-1}c_{i}z^{i}+\frac{a}{k!}z^{k}$ and $q(z)=\sum\limits_{i=0}^{k-1}d_{i}z^{i}+\frac{a}{k!}z^{k}$, $c_{i},d_{i}\in\mathbb{C}$.
Clearly $E_{f^{(k)}}(S_j,\infty)=E_{g^{(k)}}(S_j,\infty)$ for $j=1,2$, but $f^{(k)}\not\equiv g^{(k)}$.
\end{exm}
%Thus we obtained $3\leq\sharp(S)\leq 5$.\\
The following two examples show that if we choose different sets other than the specific form of choosing the first set $S$ with three or four elements  Theorem \ref{thB5} ceases to hold.
\begin{exm}\label{abc2015}
Choose four nonzero distinct complex numbers $\alpha, \beta, \gamma$ and $\delta$ such that $\alpha\beta=\gamma\delta$.\par
Let $f(z) =-\alpha e^{z}$ and, $g(z)=(-1)^{k+1}\beta e^{-z}$ ($k\geq 1$) and  $S=\{\alpha, \beta, \gamma, \delta\}$.\\ Clearly $E_{f^{(k)}}(S,\infty)=E_{g^{(k)}}(S,\infty)$ and $E_{f^{(k)}}(0,\infty)=E_{g^{(k)}}(0,\infty)$, but $f^{(k)}\not\equiv g^{(k)}$.
\end{exm}
\begin{exm}\label{abc20151}
Choose three nonzero distinct complex numbers $\alpha, \beta, \gamma$  such that $\alpha\beta=\gamma^{2}$.\par
Let $f(z) =-\alpha e^{z}$ and, $g(z)=(-1)^{k+1}\beta e^{-z}$ ($k\geq 1$) and  $S=\{\alpha, \beta, \gamma\}$.\\ Clearly $E_{f^{(k)}}(S,\infty)=E_{g^{(k)}}(S,\infty)$ and $E_{f^{(k)}}(0,\infty)=E_{g^{(k)}}(0,\infty)$, but $f^{(k)}\not\equiv g^{(k)}$.
\end{exm}
However the following question is inevitable from the above discussion:
\begin{question} Whether there exists two suitable sets $S_{1}$ with one element and $S_{2}$ with three or four elements such that when derivatives of any two non constant non entire meromorphic functions  share them with finite weight yield $f^{(k)}\equiv g^{(k)}$ ?
 \end{question}
\section {Lemmas}
$$\text{We define}~~~~F:=-\frac{(f^{(k)})^{n-1}(f^{(k)}+a)}{-b}~~\text{and}~~G:=-\frac{(g^{(k)})^{n-1}(g^{(k)}+a)}{-b},$$  where $n(\geq 1)$ and $k(\geq 1)$ are integers. By $H$ and $\Phi$, we mean the following two functions $$H:=\left(\frac{F^{''}}{F^{'}}-\frac{2F^{'}}{F-1}\right)-\left(\frac{G^{''}}{G^{'}}-\frac{2G^{'}}{G-1}\right)~~\text{and}~~\Phi:=\frac{F^{'}}{F-1}-\frac{G^{'}}{G-1}.$$
%and $$V=(\frac{F^{'}}{F-1}-\frac{F{'}}{F})-(\frac{G^{'}}{G-1}-\frac {G{'}}{G})=\frac {F{'}}{F(F-1)}-\frac{G{'}}{G(G-1)}.$$
$$\text{Also define}~~T(r):=\max\{T(r,f^{(k)}),~ T(r,g^{(k)})\}~~\text{and}~~S(r)=o(T(r)).$$
\begin{lem}\label{bd2}  If $f^{(k)}$ and $g^{(k)}$ share $(0,0)$, then $F\equiv G$ gives $f^{(k)}\equiv g^{(k)}$ where $k, n \in \mathbb{N}$.
\end{lem}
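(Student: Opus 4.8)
The plan is to start from the hypothesis $F\equiv G$, that is,
\[
(f^{(k)})^{n-1}(f^{(k)}+a)\equiv (g^{(k)})^{n-1}(g^{(k)}+a),
\]
and deduce $f^{(k)}\equiv g^{(k)}$ under the sole extra assumption that $f^{(k)}$ and $g^{(k)}$ share $0$ ignoring multiplicities. First I would set $h:=f^{(k)}/g^{(k)}$, a meromorphic function. Rewriting the identity as $(g^{(k)})^{n}\big(h^{n-1}(hg^{(k)}+a)-(g^{(k)}+a)\big)=0$ and using that $g^{(k)}\not\equiv 0$ (since $g$ is nonconstant, though one must be a little careful: $g^{(k)}$ could in principle be a nonzero polynomial of degree $<k$ — in that degenerate case $F$ and $G$ are constants and the argument simplifies, so I would dispose of it first), I get
\[
(h^{n}-1)\,g^{(k)}=-a\,(h^{n-1}-1).
\]
If $h$ is constant, then $h^{n}=1$ forces $h^{n-1}=1$ as well (otherwise the right side is a nonzero constant while the left side vanishes), and $h^{n}=h^{n-1}=1$ gives $h=1$, i.e. $f^{(k)}\equiv g^{(k)}$, which is what we want.

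So the real work is to rule out $h$ nonconstant. Suppose $h$ is nonconstant. From $(h^{n}-1)g^{(k)}=-a(h^{n-1}-1)$ we can solve
\[
g^{(k)}=-a\,\frac{h^{n-1}-1}{h^{n}-1},
\]
provided $h^{n}\not\equiv 1$; and $h^{n}\equiv 1$ with $h$ nonconstant is impossible, so this is legitimate. Now I would bring in the shared-zero hypothesis. A zero of $g^{(k)}$ is, by the sharing, also a zero of $f^{(k)}=hg^{(k)}$, and vice versa; combined with the displayed formula, the zeros of $g^{(k)}$ must come from zeros of $h^{n-1}-1$ that are not cancelled by $h^{n}-1$. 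Writing $h^{n-1}-1=\prod_{j=1}^{n-2}(h-\mu_j)$ over the primitive-type roots (excluding $h=1$, which is a common factor of $h^{n}-1$ and $h^{n-1}-1$ and hence cancels), and $h^{n}-1=(h-1)\prod_{i=1}^{n-1}(h-\nu_i)$, one sees that at a zero $z_0$ of $g^{(k)}$ we need $h(z_0)=\mu_j$ for some $j$ while $h(z_0)\neq 1$ and $h(z_0)\neq\nu_i$ for all $i$ — this is automatic since the $\mu_j$ are distinct from all the $\nu_i$ and from $1$. The key point is the reverse: $f^{(k)}=hg^{(k)}=-ah\frac{h^{n-1}-1}{h^{n}-1}=-a\frac{h^{n}-h}{h^{n}-1}$, so a zero of $f^{(k)}$ forces $h(z_0)\in\{0\}\cup\{\text{roots of }h^{n-1}=1\}$, again avoiding the poles. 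Since $f^{(k)}$ and $g^{(k)}$ share $0$ IM, at \emph{every} common zero we simultaneously need $h(z_0)=\mu_j$ (from $g^{(k)}$) and $h(z_0)\in\{0\}\cup\{\mu_i\}$ (from $f^{(k)}$) — consistent — but also, running it the other way, every zero of $g^{(k)}$ is a zero of $f^{(k)}$, so we cannot have $h(z_0)=0$ there; fine. The actual contradiction I expect to extract is via a Nevanlinna count: from $g^{(k)}=-a(h^{n-1}-1)/(h^{n}-1)$ one estimates $T(r,g^{(k)})$ in terms of $T(r,h)$, and the shared-zero condition forces the zeros of $g^{(k)}$ (equivalently the $\mu_j$-points of $h$) to be heavily restricted, typically forcing $h$ to omit too many values (a Picard-type obstruction with $n-1\geq 4$ available omitted values among $0,1,\nu_i,\mu_j$), contradicting that $h$ is nonconstant.

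The main obstacle, and the step I would spend the most care on, is precisely this last count: turning \enquote{$f^{(k)}$ and $g^{(k)}$ share $0$ IM} together with the rational relation between $g^{(k)}$ and $h$ into a clean impossibility for nonconstant $h$. Concretely, I would compare the zeros: a point is a zero of $f^{(k)}$ iff $h=0$ or $h^{n-1}=1$ (and $h$ finite, $\neq$ a pole), while it is a zero of $g^{(k)}$ iff $h^{n-1}=1$ (same exclusions). IM-sharing then says the $0$-points of $h$ contribute no zeros of $f^{(k)}$ that are not already zeros of $g^{(k)}$, i.e. every zero of $h$ is either a pole of $g^{(k)}$-type cancellation or a point where $h^{n-1}=1$ — impossible unless $h$ has \emph{no} zeros. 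Symmetrically, looking at $1/h$ and the roles of $f$ and $g$ reversed (note $g^{(k)}/f^{(k)}=1/h$ and $f^{(k)}=-a(h^{n}-h)/(h^{n}-1)$), the poles of $f^{(k)}$ occur where $h^{n}=1$, and those of $g^{(k)}$ where $h^{n}=1$ as well — but also where $h=\infty$ unless cancelled — so $h$ has no poles either. Thus $h$ omits both $0$ and $\infty$, hence $h=e^{\alpha}$ for some entire $\alpha$; plugging back, $g^{(k)}=-a(e^{(n-1)\alpha}-1)/(e^{n\alpha}-1)$ must have all its zeros and poles accounted for, and a short further argument (the $e^{(n-1)\alpha}=1$ points must all be shared zeros, forcing $\alpha$ constant once $n\geq 5$ makes the value distribution of $e^{(n-1)\alpha}$ vs $e^{n\alpha}$ incompatible with a nonconstant $\alpha$) yields the contradiction. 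Once $h$ is constant we are done by the first paragraph, so $F\equiv G\Rightarrow f^{(k)}\equiv g^{(k)}$.
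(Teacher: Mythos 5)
Your overall strategy is the right one, and it is essentially the proof of the Yi--Lin lemma to which the paper's own two-line argument delegates: set $h=f^{(k)}/g^{(k)}$, derive $g^{(k)}=-a(h^{n-1}-1)/(h^{n}-1)$, and use the IM-sharing of $0$ to show that $h$ omits $0$ and, by the symmetric computation with $1/h$, also $\infty$. Up to that point your reasoning is sound, modulo one backwards statement: a pole of $h$ is a \emph{zero} of $g^{(k)}$ (since $g^{(k)}\sim -a/h$ there) while $f^{(k)}=hg^{(k)}\to -a\neq 0$, and that is exactly what contradicts the sharing of $0$; your parenthetical about poles of $g^{(k)}$ occurring ``where $h=\infty$'' is wrong, though the symmetric argument you invoke does give the right conclusion.

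The genuine gap is the final step. Knowing only that a nonconstant $h$ omits $0$ and $\infty$, the second fundamental theorem applied to $h$ at $0$, $\infty$ and the $n-1$ roots $v_i\neq 1$ of $z^{n}=1$ gives $(n-1)T(r,h)\leq\sum_i\ol N(r,v_i;h)+S(r,h)$, and since the $v_i$-points of $h$ are exactly the poles of $g^{(k)}$ the right-hand side is bounded only by $\ol N(r,\infty;g^{(k)})+S(r,h)\leq T(r,g^{(k)})+S(r,h)=(n-1)T(r,h)+S(r,h)$: no contradiction. Your proposal never uses $k\geq 1$, and without it the statement is false --- the paper's Example 2.1 exhibits precisely such a nonconstant $h$ for the functions themselves. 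The missing ingredient is the one observation the paper's proof actually records before citing Yi--Lin: every pole of a $k$-th derivative has multiplicity at least $k+1\geq 2$, so $\ol N(r,\infty;g^{(k)})\leq\frac{1}{2}N(r,\infty;g^{(k)})\leq\frac{1}{2}T(r,g^{(k)})+O(1)$, i.e.\ $\Theta(\infty;g^{(k)})\geq\frac{1}{2}$; equivalently, every $v_i$-point of $h$ is multiple. Feeding this into the count gives $(n-1)T(r,h)\leq\frac{n-1}{2}T(r,h)+S(r,h)$, the desired contradiction. Your closing appeal to an incompatibility between $e^{(n-1)\alpha}$ and $e^{n\alpha}$ cannot substitute for this, because in that configuration the zeros of $f^{(k)}$ and $g^{(k)}$ really are shared --- only the pole multiplicities break the example. (A smaller caveat, shared with the paper: if $g^{(k)}$ is a nonzero constant, the identity $F\equiv G$ can hold with $h$ a constant different from $1$, so the degenerate case does not merely ``simplify''; it has to be excluded by the ambient hypotheses rather than by this computation.)
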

\begin{proof}
The inequality  $\ol{N}(r,\infty;f^{(k)})\leq\frac{1}{1+k}N(r,\infty;f^{(k)})\leq \frac{1}{2}N(r,\infty;f^{(k)})\leq \frac{1}{2}T(r,f^{(k)})+O(1)$
implies $\Theta(\infty,f^{(k)})\geq\frac{1}{2}$  for  $k\geq 1$. Again $F\equiv G$ implies $f^{(k)}$ and $g^{(k)}$ share $(\infty,\infty)$. Now rest of the proof is similar as \emph{Lemma 1} of (\cite{bbb}). So we omit the details.
\end{proof}
\begin{lem}\label{bb3} Suppose $f^{(k)}$ and $g^{(k)}$ share $(0,0)$, then $FG\not\equiv 1$ for $k\geq 1$ and $n\geq3$.
\end{lem}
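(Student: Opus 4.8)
The plan is to argue by contradiction: assume $FG\equiv 1$ and contradict Nevanlinna's second main theorem for $f^{(k)}$. Substituting the definitions of $F$ and $G$, the assumption reads
$$(f^{(k)})^{n-1}(f^{(k)}+a)\,(g^{(k)})^{n-1}(g^{(k)}+a)\equiv b^{2}.$$
(We may assume $f^{(k)}$, and hence $g^{(k)}$, is non-constant, the other case being immediate.) The first step is to use the shared zeros. Since $f^{(k)}$ and $g^{(k)}$ share $(0,0)$, any zero $z_{0}$ of $f^{(k)}$ is also a zero of $g^{(k)}$, while $f^{(k)}+a=g^{(k)}+a=a\neq0$ there; hence the left-hand side vanishes to order $\ge 2(n-1)\ge 4>0$ at $z_{0}$, which is impossible because the right-hand side is the nonzero constant $b^{2}$. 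So $f^{(k)}$ (and $g^{(k)}$) has no zeros, and therefore $\Theta(0,f^{(k)})=1$.

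Next I would examine the $(-a)$-points of $f^{(k)}$. If $z_{1}$ is a zero of $f^{(k)}+a$ of multiplicity $s$, then at $z_{1}$ the factor $(f^{(k)})^{n-1}(f^{(k)}+a)$ has a zero of order exactly $s$, so the factor $(g^{(k)})^{n-1}(g^{(k)}+a)$ must have a pole of order exactly $s$ there. Since $g^{(k)}$ has no zeros, and $g^{(k)}+a$ cannot have a pole where $g^{(k)}$ is finite, the pole must come from $g^{(k)}$ itself; if its order is $t$, the second factor has a pole of order $nt$, and comparing gives $s=nt$. Thus every $(-a)$-point of $f^{(k)}$ has multiplicity a multiple of $n$, whence $\ol N(r,-a;f^{(k)})\le\frac1n N(r,-a;f^{(k)})\le\frac1n T(r,f^{(k)})+O(1)$ and $\Theta(-a,f^{(k)})\ge 1-\frac1n$.

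Finally I would invoke the elementary estimate $\ol N(r,\infty;f^{(k)})\le\frac1{k+1}N(r,\infty;f^{(k)})\le\frac12 T(r,f^{(k)})+O(1)$ already used in the proof of Lemma~\ref{bd2} (this is where $k\ge1$ enters), which gives $\Theta(\infty,f^{(k)})\ge\frac12$. Adding the three deficiencies,
$$\Theta(0,f^{(k)})+\Theta(-a,f^{(k)})+\Theta(\infty,f^{(k)})\ \ge\ 1+\Big(1-\tfrac1n\Big)+\tfrac12\ =\ \tfrac52-\tfrac1n\ >\ 2\qquad(n\ge 3),$$
which contradicts $\sum_{a}\Theta(a,f^{(k)})\le 2$, establishing $FG\not\equiv1$. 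The step I expect to require the most care is the multiplicity bookkeeping for the $(-a)$-points: one must rule out every configuration other than ``$g^{(k)}$ has a pole of order $t$ exactly balancing a zero of $f^{(k)}+a$ of order $nt$'', in particular the case where $f^{(k)}$ and $g^{(k)}$ share a $(-a)$-point (which would make the left side vanish there). Once that is clean, the rest is a routine application of the second main theorem, and $n\ge 3$ is precisely the threshold that pushes the sum of deficiencies past $2$.
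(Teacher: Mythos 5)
Your proof is correct and follows essentially the same route as the paper: assume $FG\equiv 1$, use the shared zeros to show $f^{(k)}$ and $g^{(k)}$ are zero-free, show every $(-a)$-point of $f^{(k)}$ has multiplicity at least $n$, and contradict the second main theorem (in deficiency form) at $0$, $-a$, $\infty$. The only divergence is the pole term: you use $\ol N(r,\infty;f^{(k)})\leq\frac{1}{k+1}N(r,\infty;f^{(k)})\leq\frac{1}{2}T(r,f^{(k)})+O(1)$ (so $k\geq 1$ genuinely enters), whereas the paper observes that every pole of $f^{(k)}$ must be a $(-a)$-point of $g^{(k)}$ and invokes Mokhon'ko's lemma to bound it by $\frac{1}{n}T(r,f^{(k)})+S(r)$; both estimates give the same threshold $n\geq 3$.
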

\begin{proof} On contrary, we assume that $FG\equiv 1$. Then
\bea\label{r0} (f^{(k)})^{n-1}\left(f^{(k)}+a\right)(g^{(k)})^{n-1}\left(g^{(k)}+a\right)\equiv b^{2}.\eea
So by Mokhon'ko's Lemma (\cite{11.1}), we have $T(r,f^{(k)})=T(r,g^{(k)})+O(1)$. If $z_{0}$ be a $(-a)$-point of $f^{(k)}$ of order $p$, then $z_{0}$ is a pole of $g$ of order $q$ such that $p=n(q+k)\geq n$.
So $$\ol{N}(r,-a;f^{(k)})\leq\frac{1}{n}N(r,-a;f^{(k)}).$$
Again from (\ref{r0}), we have $\ol{N}(r,0;f^{(k)})=\ol{N}(r,0;g^{(k)})=S(r)$ as $E_{f^{(k)}}(0,0)=E_{g^{(k)}}(0,0)$.\par
Now by the Second Fundamental Theorem, we get
 \bea\label{r1} T(r,f^{(k)}) &\leq& \ol{N}(r,\infty;f^{(k)})+\ol{N}(r,0;f^{(k)})+\ol{N}(r,-a;f^{(k)})+S(r,f^{(k)})\\
\nonumber  &\leq& \ol{N}(r,-a;g^{(k)})+\ol{N}(r,-a;f^{(k)})+S(r)\\
\nonumber  &\leq& \frac{2}{n}T(r,f^{(k)})+S(r),\eea
  which is a contradiction as $n\geq 3$.
\end{proof}
\begin{lem}\label{b4}(\cite{3.11})
If $F$ and $G$ share $(1,l)$ where $0\leq l<\infty$, then\\
$\ol{N}(r,1;F)+\ol{N}(r,1;G)-N_{E}^{1)}(r,1,F)+(l-\frac{1}{2})\ol{N}_{*}(r,1;F,G)\leq\frac{1}{2}(N(r,1;F)+N(r,1;G)).$
\end{lem}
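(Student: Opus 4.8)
The plan is to prove the inequality by a direct accounting of the $1$-points of $F$ and $G$ according to their multiplicities, converting the weighted-sharing hypothesis into a quantitative lower bound on those multiplicities. Since $F$ and $G$ share $(1,l)$, in particular they share the value $1$ IM, so every $1$-point of $F$ is a $1$-point of $G$ and conversely. First I would partition the common $1$-points into four mutually exclusive classes according to the pair of multiplicities $(p,q)$ of $F$ and $G$ at the point: the simple common points $p=q=1$ (counted by $N_{E}^{1)}(r,1;F)$), the higher common points $p=q\geq 2$ (counted by $\ol N_{E}^{(2}(r,1;F)$), and the two ``unequal'' classes $p>q\geq 1$ and $q>p\geq 1$ (counted by $\ol N_{L}(r,1;F)$ and $\ol N_{L}(r,1;G)$ respectively). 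By definition $\ol N_{*}(r,1;F,G)=\ol N_{L}(r,1;F)+\ol N_{L}(r,1;G)$.

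With this partition the reduced counting functions decompose as
\[
\ol N(r,1;F)=\ol N(r,1;G)=N_{E}^{1)}(r,1;F)+\ol N_{E}^{(2}(r,1;F)+\ol N_{*}(r,1;F,G),
\]
so that, using $N_{E}^{1)}(r,1;F)=N_{E}^{1)}(r,1;G)$, the left-hand side of the asserted inequality collapses to
\[
N_{E}^{1)}(r,1;F)+2\,\ol N_{E}^{(2}(r,1;F)+\Big(l+\tfrac{3}{2}\Big)\ol N_{*}(r,1;F,G).
\]
It therefore suffices to bound this expression above by $\tfrac12\big(N(r,1;F)+N(r,1;G)\big)$.

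The decisive point is the weight $l$. I would observe that, under $(1,l)$-sharing, two distinct multiplicities $p\neq q$ at a common $1$-point can occur only when both $p$ and $q$ exceed $l$; indeed, if $p\leq l$ then the truncated count of $F$ is $p$, which can be matched by $G$ only when $q=p$. Hence at an $\ol N_{L}$-point one has $q\geq l+1$ and $p\geq l+2$ (or vice versa), giving $p+q\geq 2l+3$. Counting $N(r,1;F)+N(r,1;G)$ with multiplicity over the four classes then yields $2N_{E}^{1)}$ from the simple points, at least $4\,\ol N_{E}^{(2}$ from the higher common points (each multiplicity being $\geq 2$), and at least $(2l+3)$ per point from each $\ol N_{L}$-class. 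Dividing by two reproduces exactly the three terms displayed above with the correct coefficients, which proves the inequality.

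The main obstacle is precisely this middle step: justifying that a genuine multiplicity mismatch at a shared $1$-point forces both multiplicities to exceed $l$, and hence $p+q\ge 2l+3$. This is where the definition of weighted sharing (a multiplicity $m$ being counted $\min(m,l+1)$ times) is essential, and it is what converts the qualitative hypothesis into the quantitative gain carried by the term $(l-\tfrac12)\ol N_{*}(r,1;F,G)$ on the left. Everything else is routine bookkeeping with reduced and unreduced counting functions.
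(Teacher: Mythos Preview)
Your argument is correct. The partition of the common $1$-points into the four classes is exactly the right bookkeeping, the collapse of the left-hand side to $N_{E}^{1)}+2\,\ol N_{E}^{(2}+(l+\tfrac{3}{2})\ol N_{*}$ is accurate, and the key observation --- that under weight-$l$ sharing a mismatch $p\neq q$ forces $\min(p,q)\geq l+1$ and hence $p+q\geq 2l+3$ --- is precisely how the weight enters. Summing $p+q$ over the four classes and halving then gives the right-hand side with room to spare.

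As for comparison with the paper: the paper does not prove this lemma at all; it is quoted from \cite{3.11} and stated without proof. Your write-up is the standard direct proof of this inequality and is exactly what one would find in the cited source, so there is no genuine methodological difference to discuss.
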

\begin{lem}\label{b7} Let $F$, $G$ and $\Phi$ be defined as previously and $F\not\equiv G$. If $f^{(k)}$ and $g^{(k)}$ share $(0,q)$ where $0\leq q<\infty$ and $F$, $G$ share $(1,l)$, then
\beas & &\{(n-1)q+n-2\}\;\ol N(r,0;f^{(k)}\mid\geq q+1)\\
&\leq& \ol{N}(r,\infty;f^{(k)})+\ol{N}(r,\infty;g^{(k)})+\ol{N}_{*}(r,1;F,G)+S(r),\eeas
for $n(\geq3)\in\mathbb{N}$. Similar expressions hold for $g^{(k)}$ also. \end{lem}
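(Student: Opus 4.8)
The plan is to study the auxiliary function $\Phi=\frac{F'}{F-1}-\frac{G'}{G-1}$ and to convert the presence of a multiple zero of $f^{(k)}$ into a zero of $\Phi$ of correspondingly large order. I would first dispose of the degenerate case $\Phi\equiv 0$. Then $\frac{F'}{F-1}\equiv\frac{G'}{G-1}$, so integration gives $F-1\equiv A(G-1)$ for some non-zero constant $A$, and $A\neq 1$ because $F\not\equiv G$. If $z_0$ were a zero of $f^{(k)}$ of multiplicity $\geq q+1$, the hypothesis that $f^{(k)}$ and $g^{(k)}$ share $(0,q)$ would make $z_0$ a zero of $g^{(k)}$ of multiplicity $>q$, hence $F(z_0)=G(z_0)=0$, which forces $0=A\cdot 0+1-A$, i.e. $A=1$ --- a contradiction. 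So in this case $\ol N(r,0;f^{(k)}\mid\geq q+1)\equiv 0$ and the asserted inequality holds trivially.

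Next, assuming $\Phi\not\equiv 0$, I would carry out the order-of-vanishing computation. Writing $F=\frac{(f^{(k)})^{n}+a(f^{(k)})^{n-1}}{b}$ one gets $F'=\frac{(f^{(k)})^{n-2}(f^{(k)})'\bigl(nf^{(k)}+(n-1)a\bigr)}{b}$; since $a\neq 0$, at a zero $z_0$ of $f^{(k)}$ of multiplicity $p$ the factor $nf^{(k)}+(n-1)a$ is non-zero, so $F'$ vanishes there to order exactly $(n-1)p-1$, while $F-1=-1$ is analytic and non-vanishing there, whence $\frac{F'}{F-1}$ has a zero of order $(n-1)p-1$ at $z_0$. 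If moreover $p\geq q+1$, the shared-value hypothesis makes $z_0$ a zero of $g^{(k)}$ of some multiplicity $p'\geq q+1$, and the same reasoning shows $\frac{G'}{G-1}$ also vanishes there; consequently $\Phi$ has a zero at $z_0$ of order at least $\min\{(n-1)p-1,(n-1)p'-1\}\geq (n-1)(q+1)-1=(n-1)q+n-2$. Summing over all such $z_0$ gives $\{(n-1)q+n-2\}\,\ol N(r,0;f^{(k)}\mid\geq q+1)\leq N(r,0;\Phi)\leq T(r,\Phi)+O(1)$.

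It then remains to bound $T(r,\Phi)=m(r,\Phi)+N(r,\Phi)$. The proximity term is $S(r)$ by the lemma on logarithmic derivatives, since $\Phi$ is a difference of two expressions of the form $\frac{(F-1)'}{F-1}$ and $T(r,F)=nT(r,f^{(k)})+O(1)$, $T(r,G)=nT(r,g^{(k)})+O(1)$. For the counting term, every pole of $\Phi$ is simple and can occur only at a pole of $f^{(k)}$, at a pole of $g^{(k)}$, or at a common $1$-point of $F$ and $G$ at which the two multiplicities differ: at a common $1$-point with equal multiplicities the simple poles of $\frac{F'}{F-1}$ and $\frac{G'}{G-1}$ have residues equal to that common multiplicity and hence cancel in $\Phi$; here one uses that $F$ and $G$ share $(1,l)$, so that the zero sets of $F-1$ and $G-1$ coincide. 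This yields $N(r,\Phi)\leq\ol N(r,\infty;f^{(k)})+\ol N(r,\infty;g^{(k)})+\ol N_{*}(r,1;F,G)$, and assembling the three estimates gives the stated inequality; the analogous bound for $g^{(k)}$ follows at once, since interchanging $F$ and $G$ merely replaces $\Phi$ by $-\Phi$. The step I expect to be the most delicate is this last one --- the careful pole-counting of $\Phi$ together with the residue computation that produces the cancellation at matched $1$-points, and the verification that no further uncontrolled poles arise.
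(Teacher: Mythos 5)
Your argument is correct and is essentially the proof the paper intends: the paper omits the details by referring to Lemma 3.6 of \cite{3.1}, and that proof is exactly this computation with $\Phi$ --- turning each zero of $f^{(k)}$ of multiplicity $\geq q+1$ into a zero of $\Phi$ of order at least $(n-1)q+n-2$ and then bounding $N(r,0;\Phi)\leq T(r,\Phi)=N(r,\Phi)+S(r)$ with the pole count $\ol{N}(r,\infty;f^{(k)})+\ol{N}(r,\infty;g^{(k)})+\ol{N}_{*}(r,1;F,G)$. Your handling of the degenerate case $\Phi\equiv 0$ and the residue cancellation at matched $1$-points are both sound.
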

\begin{proof} The proof is similar to the proof of \emph{Lemma 3.6} of (\cite{3.1}). So we omit the details.
\end{proof}
\begin{lem}\label{today} If $f^{(k)}$ and $g^{(k)}$ share $(0,0)$; $F$ and $G$ share $(1,2)$; $n(\geq3)\in\mathbb{N}$, then
$$\ol{N}_{*}(r,1;F,G) \leq \frac{n}{4n-10}\big\{\ol{N}(r,\infty;f^{(k)})+\ol{N}(r,\infty;g^{(k)})\big\}+S(r).$$
\end{lem}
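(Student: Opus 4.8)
The plan rests on three observations about the auxiliary functions. Since $P$ has only simple roots and $0\notin S$, the functions $F$ and $G$ are built so that the $1$-points of $F$ (resp.\ $G$) are exactly the points where $f^{(k)}$ (resp.\ $g^{(k)}$) assumes one of the values of $S$, with matching multiplicities; in particular none of these $1$-points is a zero or a pole of $f^{(k)}$, and a $1$-point of $F$ of multiplicity $m$ is a zero of $f^{(k+1)}$ of multiplicity $m-1$. Secondly, because $F$ and $G$ share $(1,2)$, at a $1$-point where the multiplicity $p$ of $F$ differs from the multiplicity $q$ of $G$ one necessarily has $p\geq3$ and $q\geq3$; hence every point counted by $\ol N_{*}(r,1;F,G)=\ol N_{L}(r,1;F)+\ol N_{L}(r,1;G)$ is a multiple $1$-point of $F$, so that $\ol N_{*}(r,1;F,G)\leq \ol N(r,1;F)-N^{1)}_{E}(r,1;F)$. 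Thirdly, $F$ and $G$ share $1$ IM, whence $\ol N(r,1;F)=\ol N(r,1;G)$.

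The first step is to feed $l=2$ into Lemma \ref{b4}. Writing $N(r,1;F)=\ol N(r,1;F)+\bigl(N(r,1;F)-\ol N(r,1;F)\bigr)$ and likewise for $G$, and using $\ol N(r,1;F)=\ol N(r,1;G)$ together with the lower bound for $\ol N(r,1;F)-N^{1)}_{E}(r,1;F)$ noted above, the inequality of Lemma \ref{b4} rearranges into one of the shape
$$c_{1}\,\ol N_{*}(r,1;F,G)\leq\tfrac12\Bigl\{\bigl(N(r,1;F)-\ol N(r,1;F)\bigr)+\bigl(N(r,1;G)-\ol N(r,1;G)\bigr)\Bigr\}+S(r)$$
with an explicit absolute constant $c_{1}$.

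The second step bounds the right-hand side. By the first observation, $N(r,1;F)-\ol N(r,1;F)$ equals $\sum_{s\in S}\bigl(N(r,s;f^{(k)})-\ol N(r,s;f^{(k)})\bigr)$ and therefore counts, with multiplicity, zeros of $f^{(k+1)}$ that are neither zeros nor poles of $f^{(k)}$; thus it does not exceed $N\bigl(r,0;f^{(k+1)}/f^{(k)}\bigr)$. As $f^{(k+1)}/f^{(k)}$ has only simple poles, situated precisely at the zeros and poles of $f^{(k)}$, the first fundamental theorem and the lemma on the logarithmic derivative give
$$N(r,1;F)-\ol N(r,1;F)\leq\ol N(r,0;f^{(k)})+\ol N(r,\infty;f^{(k)})+S(r),$$
and the analogous bound holds for $g^{(k)}$. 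Since $f^{(k)}$ and $g^{(k)}$ share $(0,0)$ we have $\ol N(r,0;f^{(k)})=\ol N(r,0;g^{(k)})$, so combining the two preceding displays yields an estimate
$$\ol N_{*}(r,1;F,G)\leq c_{2}\,\ol N(r,0;f^{(k)})+c_{3}\bigl\{\ol N(r,\infty;f^{(k)})+\ol N(r,\infty;g^{(k)})\bigr\}+S(r).$$

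The final step removes $\ol N(r,0;f^{(k)})$ by invoking Lemma \ref{b7} with $q=0$, which states $(n-2)\,\ol N(r,0;f^{(k)})\leq\ol N(r,\infty;f^{(k)})+\ol N(r,\infty;g^{(k)})+\ol N_{*}(r,1;F,G)+S(r)$. Substituting the resulting upper bound for $\ol N(r,0;f^{(k)})$ into the last display and solving the ensuing linear inequality for $\ol N_{*}(r,1;F,G)$ gives, after a short computation, $\ol N_{*}(r,1;F,G)\leq\frac{n}{4n-10}\{\ol N(r,\infty;f^{(k)})+\ol N(r,\infty;g^{(k)})\}+S(r)$, the denominator being positive since $n\geq3$. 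I expect the one genuinely delicate point to be the multiplicity bookkeeping in the first paragraph: extracting the bounds $p,q\geq3$ at the mismatched $1$-points from the weight-$2$ hypothesis, and matching the multiple $1$-points of $F$ (resp.\ $G$) with the zeros of $f^{(k+1)}$ (resp.\ $g^{(k+1)}$). Once that is settled, the remainder is a routine chain of Nevanlinna estimates followed by a single linear elimination.
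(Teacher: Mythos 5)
Your argument is correct, and it shares the overall skeleton of the paper's proof: bound the mismatched $1$-points by zeros of $f^{(k+1)}$ lying away from the zeros and poles of $f^{(k)}$, estimate those by $\ol N(r,0;f^{(k)})+\ol N(r,\infty;f^{(k)})+S(r)$ via the logarithmic derivative, eliminate $\ol N(r,0;f^{(k)})$ with Lemma \ref{b7} (taken with $q=0$), and solve the resulting linear inequality for $\ol N_{*}(r,1;F,G)$. Where you genuinely differ is in how the coefficient in front of $\ol N_{*}(r,1;F,G)$ is produced: the paper argues directly that every point counted in $\ol N_{*}$ is a $1$-point of $F$ (and of $G$) of multiplicity at least $3$, hence a zero of $f^{(k+1)}$ (resp.\ $g^{(k+1)}$) of multiplicity at least $2$, giving $2\ol N_{*}(r,1;F,G)\leq 2\ol N(r,1;F|\geq 3)\leq N(r,0;f^{(k+1)}|f^{(k)}\not=0)$ and its analogue for $G$, and then adds the two inequalities; you instead route through Lemma \ref{b4} with $l=2$ together with the decomposition $\ol N(r,1;F)-N^{1)}_{E}(r,1;F)=\ol N^{(2}_{E}(r,1;F)+\ol N_{*}(r,1;F,G)$, arriving at $\tfrac{5}{2}\ol N_{*}(r,1;F,G)\leq\tfrac{1}{2}\{(N(r,1;F)-\ol N(r,1;F))+(N(r,1;G)-\ol N(r,1;G))\}$. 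Both mechanisms are legitimate, and the remaining two steps of your argument are exactly the paper's. One small inaccuracy: carrying your constants through (so $5\ol N_{*}\leq 2\ol N(r,0;f^{(k)})+\ol N(r,\infty;f^{(k)})+\ol N(r,\infty;g^{(k)})+S(r)$, then substituting the bound from Lemma \ref{b7}) yields $\ol N_{*}(r,1;F,G)\leq\frac{n}{5n-12}\{\ol N(r,\infty;f^{(k)})+\ol N(r,\infty;g^{(k)})\}+S(r)$ rather than $\frac{n}{4n-10}$; since $5n-12>4n-10>0$ for $n\geq 3$ this is a strictly stronger estimate and the stated lemma follows a fortiori, but you should not claim that your computation produces exactly the paper's constant.
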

\begin{proof}
Applying Lemma \ref{b7}, we have
\bea\label{noon1}  && 2\ol{N}_{*}(r,1;F,G) \leq 2\ol{N}(r,1;F|\geq 3)\\
\nonumber &\leq& N(r,0;f^{(k+1)}|f^{(k)}\not=0)+S(r)\\
\nonumber &\leq& \ol{N}(r,0;f^{(k)})+\ol{N}(r,\infty;f^{(k)})+S(r)\\
\nonumber &\leq& \frac{1}{n-2}\big\{\ol{N}(r,\infty;f^{(k)})+\ol{N}(r,\infty;g^{(k)})+\ol{N}_{*}(r,1;F,G)\big\}+\ol{N}(r,\infty;f^{(k)})+S(r).\eea
Again,
\bea\label{noon2}  && 2\ol{N}_{*}(r,1;F,G)\leq 2\ol{N}(r,1;G|\geq 3)\\
\nonumber &\leq& \frac{1}{n-2}\big\{\ol{N}(r,\infty;g^{(k)})+\ol{N}(r,\infty;f^{(k)})+\ol{N}_{*}(r,1;F,G)\big\}+\ol{N}(r,\infty;g^{(k)})+S(r).\eea
Adding (\ref{noon1}) and (\ref{noon2}), we have
\bea\label{noon3}  && \ol{N}_{*}(r,1;F,G)\leq \frac{n}{4n-10}\big\{\ol{N}(r,\infty;f^{(k)})+\ol{N}(r,\infty;g^{(k)})\big\}+S(r).\eea
Hence the proof is completed.
\end{proof}
\begin{lem}\label{biku1} Suppose $f$ and $g$ are two non constant meromorphic functions such that $f^{(k)}$ and $g^{(k)}$ share $(S,l)$ and $(0,q)$, where $k\geq 1$, $l\geq 2$, %\item $f$ and $g$ are non constant entire functions and $n\geq 3$, $q\geq 0$; or
 $n\geq 5$ and $q\geq 1,$
then $H \equiv 0$.
\end{lem}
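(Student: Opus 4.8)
The plan is to argue by contradiction, assuming $H\not\equiv0$, and derive an inequality on $T(r,f^{(k)})+T(r,g^{(k)})$ that forces $n<5$. First I would record the standard consequence of $H\not\equiv0$: since $H$ has only simple poles, the second fundamental theorem applied to $F$ and $G$ together with the identity relating $N(r,H)$ to the shared-set data gives a bound of the form
\[
T(r,F)+T(r,G)\leq \ol N(r,0;F)+\ol N(r,0;G)+\ol N(r,\infty;F)+\ol N(r,\infty;G)+\ol N(r,1;F)+\ol N(r,1;G)+\ol N_*(r,1;F,G)+S(r),
\]
where I must be careful to track the contribution of the multiple $1$-points via Lemma \ref{b4}. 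Because $F=-(f^{(k)})^{n-1}(f^{(k)}+a)/(-b)$, Mokhon'ko's lemma gives $T(r,F)=n\,T(r,f^{(k)})+S(r)$ and likewise for $G$; the zeros of $F$ come from zeros of $f^{(k)}$ and from $(-a)$-points of $f^{(k)}$, and the poles of $F$ are exactly the poles of $f^{(k)}$.

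Next I would feed in the structural facts available in this setting. Since $f^{(k)}$ and $g^{(k)}$ share $(0,q)$ with $q\geq1$, the simple $0$-points of $f^{(k)}$ are matched, and Lemma \ref{b7} controls $\ol N(r,0;f^{(k)}\mid\geq q+1)$ in terms of the pole-counting functions and $\ol N_*(r,1;F,G)$. Similarly, since $F$ and $G$ share $(1,l)$ with $l\geq2$, Lemma \ref{today} (which is exactly the case $l=2$, and the case $l>2$ is only better) bounds $\ol N_*(r,1;F,G)$ by $\frac{n}{4n-10}\{\ol N(r,\infty;f^{(k)})+\ol N(r,\infty;g^{(k)})\}+S(r)$. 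The $(-a)$-points of $f^{(k)}$ contribute to $\ol N(r,0;F)$ but, being zeros of the derivative-type expression, are controlled by $\ol N(r,\infty;f^{(k)})$ up to $S(r)$ as in the Milloux-type estimate $\ol N(r,-a;f^{(k)})\leq \ol N(r,0;f^{(k+1)}\mid f^{(k)}\neq0)+S(r)\leq \ol N(r,0;f^{(k)})+\ol N(r,\infty;f^{(k)})+S(r)$, and the $\ol N(r,1;F)$ terms are absorbed using $\ol N(r,1;F)\leq T(r,F)+O(1)$ combined with Lemma \ref{b4} to save a factor. Finally, I would use $\Theta(\infty,f^{(k)})\geq\frac12$ (the inequality noted in the proof of Lemma \ref{bd2}, valid for $k\geq1$), so that $\ol N(r,\infty;f^{(k)})\leq\frac12 T(r,f^{(k)})+S(r)$ and likewise for $g^{(k)}$.

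Collecting all of this, the left side is $n\{T(r,f^{(k)})+T(r,g^{(k)})\}+S(r)$ and the right side becomes a numerical multiple of $T(r,f^{(k)})+T(r,g^{(k)})$ whose coefficient I expect to be something like $\big(2+\frac{n}{4n-10}+\text{small terms in }\frac1{n-2}\big)\cdot\frac12 + \ldots$ plus the contributions from the $0$-points and $(-a)$-points; the inequality then reads $n\leq c(n)$ with $c(n)<n$ precisely when $n\geq5$, giving the contradiction. The main obstacle will be the bookkeeping in the second step: organizing the zero-counting of $F$ into the parts coming from $\ol N(r,0;f^{(k)}\mid=1)$ (matched, hence halved between $F$ and $G$), $\ol N(r,0;f^{(k)}\mid\geq q+1)$ (controlled by Lemma \ref{b7}), and the $(-a)$-points, and then verifying that after substituting Lemma \ref{today} and $\Theta(\infty,\cdot)\geq\frac12$ the threshold is exactly $n=5$ rather than something larger — this is where the precise constants $\frac{n}{4n-10}$ and $\frac1{n-2}$ have to conspire correctly, and where I would allocate the bulk of the careful computation.
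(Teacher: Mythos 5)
Your overall skeleton is the same as the paper's: assume $H\not\equiv 0$, bound $\overline{N}(r,1;F|=1)$ by $N(r,\infty;H)$, apply the Second Fundamental Theorem, and then close the estimate with Lemma \ref{b4}, Lemma \ref{b7}, Lemma \ref{today} and the observation $\overline{N}(r,\infty;f^{(k)})\leq\frac{1}{2}N(r,\infty;f^{(k)})$ for $k\geq 1$. However, there is a genuine error in your treatment of the ``extra'' zeros attached to $F$. You claim
$\overline{N}(r,-a;f^{(k)})\leq \overline{N}(r,0;f^{(k+1)}\mid f^{(k)}\neq 0)+S(r)$,
on the grounds that $(-a)$-points of $f^{(k)}$ are ``zeros of the derivative-type expression.'' They are not: a simple $(-a)$-point of $f^{(k)}$ is in general not a zero of $f^{(k+1)}$ at all. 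Take $f$ with $f^{(k)}=e^{z}$: then $\overline{N}(r,-a;f^{(k)})=T(r,e^{z})+O(1)$ while $f^{(k+1)}=e^{z}$ has no zeros, so your inequality fails by a full characteristic function. The term $\overline{N}(r,-a;f^{(k)})$ coming from $\overline{N}(r,0;F)$ can only be bounded by $T(r,f^{(k)})+O(1)$, and this is exactly why the paper's count effectively loses one copy of $T(r,f^{(k)})+T(r,g^{(k)})$ on the left-hand side (the paper instead carries the nonzero critical value $-a\frac{n-1}{n}$ of $P$ as an extra target in the Second Fundamental Theorem and pays $2T$ for its two occurrences). Relatedly, your bound for $N(r,\infty;H)$ omits the $-a\frac{n-1}{n}$-points of $f^{(k)}$ and $g^{(k)}$ entirely: since $F'=(f^{(k)})^{n-2}\bigl(nf^{(k)}+(n-1)a\bigr)f^{(k+1)}/b$, these points are zeros of $F'$ and are unavoidable poles of $H$; they too cost $T(r,f^{(k)})$ each unless cancelled against the $N_{0}$-terms of the Second Fundamental Theorem, which you would have to set up explicitly.

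This is not absorbable slack. At the critical case $n=5$, $q=1$, $l=2$ the paper's final inequality pits the coefficient $\frac{n}{2}-1=\frac{3}{2}$ against roughly $1.48$ on the right, so the margin is only about $0.02\,(T(r,f^{(k)})+T(r,g^{(k)}))$. An unjustified saving of even a fraction of $T(r,f^{(k)})$ via the false Milloux-type estimate would ``prove'' a stronger, incorrect threshold, while the honest bound $\overline{N}(r,-a;f^{(k)})\leq T(r,f^{(k)})$ forces you to redo the bookkeeping from scratch; your sketch gives no evidence that the corrected constants still close at $n=5$. To repair the argument you should follow the paper's accounting: apply the Second Fundamental Theorem to $f^{(k)}$ and $g^{(k)}$ with the $n+3$ values $\{0,\infty,-a\frac{n-1}{n}\}\cup S$ (so the left side is $(n+1)\bigl(T(r,f^{(k)})+T(r,g^{(k)})\bigr)$), include $\overline{N}\bigl(r,-a\frac{n-1}{n};f^{(k)}\bigr)+\overline{N}\bigl(r,-a\frac{n-1}{n};g^{(k)}\bigr)$ explicitly in the bound for $N(r,\infty;H)$, and only then substitute Lemmas \ref{b4}, \ref{b7} and \ref{today}.
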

\begin{proof} On contrary, we assume that $H \not\equiv 0$. Then clearly $F\not\equiv G$ and
%Now $F'=-\frac{n}{b}(f^{(k)})^{n-2}(f^{(k)}+a\frac{n-1}{n})(f^{(k+1)})$ and $G'=-\frac{n}{b}(g^{(k)})^{n-2}(g^{(k)}+a\frac{n-1}{n})(g^{(k+1)})$.\\
%As $E_{f^{(k)}}(0,q)=E_{g^{(k)}}(0,q)$  we have
\bea\label{m1}
&& \overline{N}(r,1;F|=1)=\overline{N}(r,1;G|=1)\leq N(r,\infty;H)\\
\nonumber&\leq& \ol{N}(r,\infty;f^{(k)})+\ol{N}(r,\infty;g^{(k)})+\ol{N}\left(r,-a\frac{n-1}{n};f^{(k)}\right)+\ol{N}\left(r,-a\frac{n-1}{n};g^{(k)}\right)\\
\nonumber&+& \ol{N}_{*}(r,0;f^{(k)},g^{(k)})+ \ol{N}_{*}(r,1;F,G)+\ol{N}_{0}(r,0;f^{(k+1)})+\ol{N}_{0}(r,0;g^{(k+1)}),\eea
where $\ol{N}_{0}(r,0;f^{(k+1)})$ is the reduced counting function of zeros of $f^{(k+1)}$ which is not zeros of $f^{(k)}\left(f^{(k)}+a\frac{n-1}{n}\right)$ and $(F-1)$. Similarly $\ol{N}_{0}(r,0;g^{(k+1)})$ is defined.\par
%Next with the help of the Second Fundamental Theorem, Lemma \ref{b4}, inequality (\ref{m1}) and the fact %$\overline{N}(r,\infty;f^{(k)})\leq\frac{1}{2}N(r,\infty;f^{(k)})$, we get
Now using the Second Fundamental Theorem, Lemma \ref{b4} and inequality (\ref{m1}), we get
\bea\label{n2}&& (n+1)(T(r,f^{(k)})+T(r,g^{(k)}))\\
\nonumber &\leq& \overline{N}(r,\infty;f^{(k)})+\overline{N}(r,0;f^{(k)})+\overline{N}(r,\infty;g^{(k)})+\overline{N}(r,0;g^{(k)})\\
\nonumber &+& \overline{N}(r,1;F)+\overline{N}(r,1;G)+\ol{N}\left(r,-a\frac{n-1}{n};f^{(k)}\right)+\ol{N}\left(r,-a\frac{n-1}{n};g^{(k)}\right)\\
\nonumber&-& N_{0}(r,0,f^{(k+1)})-N_{0}(r,0,g^{(k+1)})+S(r,f^{(k)})+S(r,g^{(k)})\\
\nonumber &\leq& 2\left\{\overline{N}(r,\infty;f^{(k)})+\overline{N}(r,\infty;g^{(k)})+\ol{N}\left(r,-a\frac{n-1}{n};f^{(k)}\right)+\ol{N}\left(r,-a\frac{n-1}{n};g^{(k)}\right)\right\}\\
\nonumber &+& \overline{N}(r,0;f^{(k)})+\overline{N}(r,0;g^{(k)})+\ol{N}_{*}(r,0;f^{(k)},g^{(k)})+\overline{N}(r,1;F)+\overline{N}(r,1;G)\\
\nonumber &-&\overline{N}(r,1;F|=1)+\ol{N}_{*}(r,1;F,G)+S(r,f^{(k)})+S(r,g^{(k)})\\
\nonumber &\leq& 2\big\{T(r,f^{(k)})+T(r,g^{(k)})\big\}+ 2\big\{\overline{N}(r,\infty;f^{(k)})+\overline{N}(r,\infty;g^{(k)})\big\}+\overline{N}(r,0;f^{(k)})\\
\nonumber &+& \overline{N}(r,0;g^{(k)})+\overline{N}(r,0;f^{(k)}|\geq q+1)+\frac{1}{2}(N(r,1;F)+N(r,1;G))\\
\nonumber &+& \left(\frac{3}{2}-l\right)\ol{N}_{*}(r,1;F,G)+S(r,f^{(k)})+S(r,g^{(k)})\\
\nonumber &\leq& \left(2+\frac{n}{2}\right)\big\{T(r,f^{(k)})+T(r,g^{(k)})\big\}+2\big\{\overline{N}(r,\infty;f^{(k)})+\overline{N}(r,\infty;g^{(k)})\big\}+2\overline{N}(r,0;f^{(k)})\\
\nonumber &+& \overline{N}(r,0;f^{(k)}|\geq q+1)+\left(\frac{3}{2}-l\right)\ol{N}_{*}(r,1;F,G)+S(r,f^{(k)})+S(r,g^{(k)}).\eea
Using Lemma \ref{b7} and Lemma \ref{today} in the above inequality, we obtain
\bea\label{n3}&&\left(\frac{n}{2}-1\right)(T(r,f^{(k)})+T(r,g^{(k)}))\\
\nonumber &\leq& 2\big\{\overline{N}(r,\infty;f^{(k)})+\overline{N}(r,\infty;g^{(k)})\big\}\\
\nonumber &+& \left(\frac{2}{n-2}+\frac{1}{(n-1)q+(n-2)}\right)\big\{\overline{N}(r,\infty;f^{(k)})+\overline{N}(r,\infty;g^{(k)})+\ol{N}_{*}(r,1;F,G)\big\}\\
\nonumber &+& \left(\frac{3}{2}-l\right)\ol{N}_{*}(r,1;F,G)+S(r,f^{(k)})+S(r,g^{(k)})\nonumber\\ &\leq& \left(2+\frac{2}{n-2}+\frac{1}{(n-1)q+(n-2)}\right)\big\{\ol{N}(r,\infty;f^{(k)})+\ol{N}(r,\infty;g^{(k)})\big\}\nonumber\\
\nonumber &+& \left(\frac{n}{4n-10}\right)\left(\frac{3}{2}-l+\frac{2}{n-2}+\frac{1}{(n-1)q+(n-2)}\right)\{\ol{N}(r,\infty;f^{(k)})+\ol{N}(r,\infty;g^{(k)})\}\\
\nonumber &+& S(r,f^{(k)})+S(r,g^{(k)}).\nonumber\eea
So from (\ref{n3}), we get
\bea&&\left(\frac{n}{2}-1\right)(T(r,f^{(k)})+T(r,g^{(k)}))\\
\nonumber &\leq& \left(1+\frac{1}{n-2}+\frac{1}{2(n-1)q+2(n-2)}\right)\{N(r,\infty;f^{(k)})+N(r,\infty;g^{(k)})\}\\
\nonumber &+& \left(\frac{n}{4n-10}\right)\left(\frac{1}{n-2}+\frac{1}{2(n-1)q+2(n-2)}-\frac{1}{4}\right)\big\{{N}(r,\infty;f^{(k)})+{N}(r,\infty;g^{(k)})\big\}\\
\nonumber &+& S(r,f^{(k)})+S(r,g^{(k)}),\eea
which leads to a contradiction when $n\geq 5$ and $q\geq 1$. Thus $H \equiv 0$.
\end{proof}
%%%%%%%%%%%%%%%%%%%%%%%%%%%%%%%%%%%%%%%%%%%%%%%%%%%%%%%%%%%%%%%%%%%%%%%%%%%%%%%%%%%%%%%%%%%%%%%%%%%%%%%%%%%%%%%%%%%%%%%%%%%%%%%%%%%%%%%%%%%%%%%%%
\begin{lem}\label{biku} Suppose $f^{(k)}$ and $g^{(k)}$ share $(0,0)$ and $H\equiv 0$. If $n\geq 4$ and $k\geq 1$, then $f^{(k)}\equiv g^{(k)}$.
\end{lem}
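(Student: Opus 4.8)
The plan is to integrate the hypothesis $H\equiv0$ into a Möbius relation between $F$ and $G$, and then to show that, for $n\geq4$, the only possibility compatible with the sharing of $0$ is $F\equiv G$, whereupon Lemma~\ref{bd2} yields $f^{(k)}\equiv g^{(k)}$.

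First I would observe that $\dfrac{F''}{F'}-\dfrac{2F'}{F-1}$ is the logarithmic derivative of $\dfrac{F'}{(F-1)^{2}}=-\Bigl(\dfrac{1}{F-1}\Bigr)'$, so integrating $H\equiv0$ twice produces
\[
\frac{1}{F-1}=\frac{A}{G-1}+B,\qquad A\neq0,
\]
equivalently $F=\dfrac{(B+1)G+(A-B-1)}{BG+(A-B)}$, a Möbius transform of $G$. Hence $T(r,F)=T(r,G)+O(1)$, and since each of $F,G$ is a polynomial of degree $n$ in $f^{(k)}$ (resp.\ $g^{(k)}$), Mokhon'ko's lemma gives $T(r,f^{(k)})=T(r,g^{(k)})+O(1)$, so we may use a single error term $S(r)$. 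I would keep at hand the shape of $F$: its poles are the poles of $f^{(k)}$ with multiplicities multiplied by $n$ (so every pole of $F$ has multiplicity divisible by $n$); $F=0$ exactly at the zeros and the $(-a)$-points of $f^{(k)}$; and, for any $w\notin\{0,1,\infty\}$, $F=w$ holds exactly where $f^{(k)}$ attains one of the roots of a polynomial $z^{n}+az^{n-1}+c$ with $c\neq0$ depending on $w$, and the analogous statement holds for $G$ and $g^{(k)}$ — moreover any such polynomial has at least $n-1$ distinct roots, none of them $0$ or $-a$, its only possible multiple root being a double root at $-\tfrac{(n-1)a}{n}$. Finally, $k\geq1$ makes every pole of $f^{(k)}$ and of $g^{(k)}$ at least double, so $\ol N(r,\infty;f^{(k)})\leq\tfrac12T(r,f^{(k)})+O(1)$ and likewise for $g^{(k)}$.

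Now I would split on the pair $(A,B)$. If $B=0$ and $A=1$ then $F\equiv G$ and Lemma~\ref{bd2} finishes; if $A=B=-1$ then $FG\equiv1$, which is excluded by Lemma~\ref{bb3}. In each remaining branch I would force a contradiction through one of three mechanisms. \emph{(a) A function loses its zeros.} If $B=0$, $A\neq1$, then $F=\tfrac{1}{A}G+\tfrac{A-1}{A}$; a common zero of $f^{(k)}$ and $g^{(k)}$ would have $G=0$ there, while $F=0$ forces $G=1-A\neq0$ — impossible — so $f^{(k)}$ and $g^{(k)}$ have no zeros, the $(-a)$-points of $f^{(k)}$ become exactly the $(1-A)$-points of $G$, i.e.\ the points where $g^{(k)}$ attains one of the $\geq n-1$ distinct roots $\gamma_{1},\dots,\gamma_{j}$ ($j\geq n-1$) of the corresponding polynomial, and a short computation — using $\sum_{i}\ol N(r,\gamma_{i};g^{(k)})=\ol N(r,-a;f^{(k)})$ and the pole estimate — converts the Second Fundamental Theorem for $g^{(k)}$ at $0,-a,\gamma_{1},\dots,\gamma_{j},\infty$ into $(j+1)T(r,g^{(k)})\leq\tfrac52T(r,g^{(k)})+S(r)$, impossible as $j\geq n-1\geq3$. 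The same ``no zeros'' phenomenon occurs when $B\neq0$, $A=B\neq-1$: there $F=\tfrac{B+1}{B}-\tfrac{1}{BG}$ has a pole at every zero of $g^{(k)}$, so the sharing of $0$ forces $g^{(k)}$ to have no zeros and its $(-a)$-points to be poles of $F$, hence of order divisible by $n$, and the Second Fundamental Theorem at $0,-a,\infty$ then gives $\bigl(\tfrac12-\tfrac1n\bigr)T(r,g^{(k)})\leq S(r)$, a contradiction. \emph{(b) A function omits $0$ and $-a$.} If $B=-1\neq A$, then $F=\tfrac{A}{(A+1)-G}$ never vanishes, so $f^{(k)}$ omits $0$ and $-a$, and the Second Fundamental Theorem at $0,-a,\infty$ together with the pole estimate gives $\tfrac12T(r,f^{(k)})\leq S(r)$, a contradiction. \emph{(c) $F$ omits a good value.} If $B\neq0$, $A\neq B$ and $B\neq-1$, then $F-\tfrac{B+1}{B}=\tfrac{-A/B}{BG+(A-B)}$ never vanishes, so $F$ omits $\tfrac{B+1}{B}\notin\{0,1,\infty\}$; thus $f^{(k)}$ omits all $\geq n-1$ distinct roots of the associated polynomial $z^{n}+az^{n-1}+c$, which contradicts the fact that a non-constant meromorphic function has at most two omitted values — provided $n\geq4$.

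The two integrations, the explicit Möbius forms, and the bookkeeping of zero/pole multiplicities are routine; the real work is organizational — in each branch one must decide which of $f^{(k)},g^{(k)}$ is forced to shed its zeros (or $(-a)$-points) and chase multiplicities through $F=\tfrac{(B+1)G+(A-B-1)}{BG+(A-B)}$. I expect the single point where the hypothesis $n\geq4$ (rather than merely $n\geq3$) is genuinely needed to be mechanism (c), the obstruction there being nothing deeper than Picard's bound on the number of omitted values.
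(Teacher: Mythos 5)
Your overall strategy---integrate $H\equiv 0$ to a M\"obius relation between $F$ and $G$, run an exhaustive case analysis on the coefficients, reduce the surviving cases to $F\equiv G$ or $FG\equiv 1$, and invoke Lemmas \ref{bd2} and \ref{bb3}---is essentially the paper's, only with the relation parametrized as $\frac{1}{F-1}=\frac{A}{G-1}+B$ instead of $F=\frac{AG+B}{CG+D}$, and your branch (a) (including the $A=B$ sub-branch) is sound. But branches (b) and (c) rest on a false step. You assert that $F=\frac{A}{(A+1)-G}$ \enquote{never vanishes}, and that $F-\frac{B+1}{B}=\frac{-A/B}{BG+(A-B)}$ \enquote{never vanishes}. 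A quotient with nonzero constant numerator and denominator affine in $G$ vanishes precisely at the poles of $G$, i.e.\ at the poles of $g^{(k)}$, and $g$ is only assumed meromorphic, so such points need not be absent. Consequently, in (b) the conclusion that $f^{(k)}$ omits $0$ and $-a$ is unjustified, and in (c) the conclusion that $F$ omits $\tfrac{B+1}{B}$---and with it the entire appeal to Picard's bound on omitted values---collapses. As written, two of your three contradiction mechanisms do not go through.

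The gap is repairable, and the repair is in effect what the paper does in its Case 1, where it correctly bounds $\overline{N}(r,A/C;F)$ by $\overline{N}(r,\infty;g^{(k)})$ rather than claiming the value is omitted: the offending value of $F$ is attained only at poles of $g^{(k)}$, each of multiplicity at least $k+1\geq 2$ and magnified by $n$ through $G$, so its reduced counting function is at most $\overline{N}(r,\infty;g^{(k)})\leq\frac{1}{2}T(r,g^{(k)})+O(1)$. Substituting this bound for \enquote{omitted} in the Second Fundamental Theorem still yields, in branch (c), $(j-2)T(r,f^{(k)})\leq S(r)$ with $j\geq n-1$ distinct roots of the associated polynomial, hence a contradiction exactly when $n\geq 4$, and an analogous (stronger) contradiction in branch (b). So your architecture survives and even lands the $n\geq 4$ threshold where you predicted, but the two branches need the Second Fundamental Theorem with explicit counting functions, not Picard; the proof as submitted is wrong at that step, not merely terse.
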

\begin{proof} Given $H\equiv 0$. On integration, we have
\bea\label{pe1.1} F\equiv\frac{AG+B}{CG+D},\eea
where $A,B,C,D$ are constant satisfying $AD-BC\neq 0 $. So $A=C=0$ never occur.\par
Thus clearly $F$ and $G$ share $(1,\infty)$. Now by Mokhon'ko's Lemma (\cite{11.1}), we have
\bea\label{pe1.2} T(r,f^{(k)})=T(r,g^{(k)})+S(r).\eea
Next we consider the following two cases:\\
\textbf{Case-1} Assume $AC\neq0.$ In this case (\ref{pe1.1}) can be written as
\bea\label{biral} F-\frac{A}{C}\equiv\frac{BC-AD}{C(CG+D)}.\eea
%So, $$\overline{N}(r,\frac{A}{C};F)=\overline{N}(r,\infty;G).$$
Now by using the Second Fundamental Theorem, equations (\ref{pe1.2}) and (\ref{biral}), we get
\beas && n T(r,f^{(k)})+O(1)=T(r,F)\\
 &\leq& \overline{N}(r,\infty;F)+\overline{N}(r,0;F)+\overline{N}\left(r,\frac{A}{C};F\right)+S(r,F)\\
&\leq& \overline{N}(r,\infty;f^{(k)})+\overline{N}(r,0;f^{(k)})+T(r,f^{(k)})+\overline{N}(r,\infty;g^{(k)})+S(r,f^{(k)})\\
&\leq& \frac{1}{2}\left(N(r,\infty;f^{(k)})+N(r,\infty;g^{(k)})\right)+\overline{N}(r,0;f^{(k)})+T(r,f^{(k)})+S(r,f^{(k)})\\
&\leq& 3T(r,f^{(k)})+S(r,f^{(k)}),\eeas
which is a contradiction as $n\geq 4$.\\
\textbf{Case-2} Next we assume $AC=0$. Now we consider the following subcases:\par
\textbf{Subcase-2.1} Let $A=0$ and $C\neq0$. Hence $B\neq0$. Thus equation (\ref{pe1.1}) becomes
\bea \label{chegg} F\equiv\frac{1}{\gamma G+\delta},~~\text{where}~~\gamma=\frac{C}{B}~~\text{and}~~\delta=\frac{D}{B}.\eea
If $F$ has no $1$-point,  then in view of the Second Fundamental Theorem, we get
\beas && n T(r,f^{(k)})+O(1)=T(r,F)\\
&\leq& \overline{N}(r,\infty;F)+\overline{N}(r,0;F)+\overline{N}(r,1;F)+S(r,F)\\
&\leq& \overline{N}(r,\infty;f^{(k)})+\overline{N}(r,0;f^{(k)})+T(r,f^{(k)})+S(r,f^{(k)})\\
&\leq& \frac{5}{2}T(r,f^{(k)})+S(r,f^{(k)}),\eeas
which is a contradiction as $n\geq 4$.\\
Thus there exist atleast one $z_{0}$ such that $F(z_{0})=G(z_{0})=1$. Hence from equation (\ref{chegg}), we get $\gamma+\delta=1$, $\gamma\neq0$ and hence
\bea F\equiv\frac{1}{\gamma G+1-\gamma}.\eea
If $\gamma\neq1$, then the Second Fundamental Theorem yields
\beas T(r,G) &\leq& \overline{N}(r,\infty;G)+\overline{N}(r,0;G)+\overline{N}\left(r,0;G+\frac{1-\gamma}{\gamma}\right)+S(r,G)\\
&\leq& \overline{N}(r,\infty;g^{(k)})+\overline{N}(r,0;g^{(k)})+T(r,g^{(k)})+\overline{N}(r,\infty;f^{(k)})+S(r,g^{(k)})\\
&\leq& \frac{3}{n}T(r,F)+S(r,F),\eeas
which is impossible as $n\geq 4$. Thus $\gamma=1$, i.e., $FG\equiv 1$ which is not possible by Lemma \ref{bb3}.\par
\textbf{Subcase-2.2} Let $A\neq0$ and $C=0$. Hence $D\neq0$. So we can write equation (\ref{pe1.1}) as
\bea F\equiv\lambda G+\mu,~~~\text{where}~~\lambda=\frac{A}{D}~~\text{and}~~\mu=\frac{B}{D}.\eea
If $F$ has no $1$-point, then proceeding as above in \emph{Subcase-2.1}, we arrive at a contradiction. Thus $\lambda+\mu=1$ with $\lambda\neq0$. If $\lambda \neq1$, then $B\not= 0$ and hence equation (\ref{pe1.1}) yields \beas \ol{N}(r,0,f^{(k)})=\ol{N}(r,0,g^{(k)})=S(r).\eeas
Now  applying the Second Fundamental Theorem, we obtain
\beas T(r,G) &\leq& \overline{N}(r,\infty;G)+\overline{N}(r,0;G)+\overline{N}\left(r,0;G+\frac{1-\lambda}{\lambda}\right)+S(r,G)\\
&\leq& \overline{N}(r,\infty;g^{(k)})+\overline{N}(r,0;g^{(k)})+T(r,g^{(k)})+\overline{N}(r,0;f^{(k)})+T(r,f^{(k)})+S(r,g^{(k)})\\
&\leq& \frac{5}{2n}T(r,G)+S(r,G),\eeas
which is a contradiction as $n\geq4$.\\
Thus $\lambda=1$ and hence $F\equiv G$. Consequently Lemma \ref{bd2} gives $f^{(k)}\equiv g^{(k)}$.
\end{proof}
%\begin{lem}\label{biku2} Suppose $f^{(k)}$ and $g^{(k)}$ share $(0,0)$ and $H\equiv 0$. Moreover $f$ and $g$ are entire functions. If $n\geq 3$ and $k\geq 1$, then $f^{(k)}\equiv g^{(k)}$.
%\end{lem}
%\begin{proof} If we keep in our mind that $\ol{N}(r,\infty;f)=\ol{N}(r,\infty;g)=S(r,f)$, then the proof is similar to the proof of Lemma \ref{biku}.
%\end{proof}
%%%%%%%%%%%%%%%%%%%%%%%%%%%%%%%%%%%%%%%%%%%%%%%%%%%%%%%%%%%%%%%%%%%%%%%%%%%%%%%%%%%%%%%%%%%%%%%%%%%%%%%%%%%%%%%%%%%
\section {Proof of the theorem}
%%%%%%%%%%%%%%%%%%%%%%%%%%%%%%%%%%%%%%%%%%%%%%%%%%%%%%%%%%%%%%%%%%%%%%%%%%%%%%%%%%%%%%%%%%%%%%%%%%%%%%%%%%%%%%%%%%
\begin{proof}[\textbf{Proof of Theorem \ref{thB5} }]
Given that $f^{(k)}$ and $g^{(k)}$ share $(S,2)$ and $(0,1)$. Since $l=2$ and $q=1$, so in view of Lemma \ref{biku1}, we get $H\equiv 0$.
Next we apply the Lemma \ref{biku}, and we obtain our desired result $f^{(k)}\equiv g^{(k)}$ when $n\geq 5$. Hence the theorem is proved.
\end{proof}
%%%%%%%%%%%%%%%%%%%%%%%%%%%%%%%%%%%%%%%%%%%%%%%%%%%%%%%%%%%%%%%%%%%
%\begin{proof}[\textbf{Proof of Corollary \ref{thB51} }]
%Given that $f^{(k)}$ and $g^{(k)}$ share $(S,2)$ and $(0,0)$. Thus here $l=2$ and $q=0$. So in view of Lemma \ref{biku1},  $H\equiv 0$. Then applying the Lemma \ref{biku2} we obtained our desired result $f^{(k)}\equiv g^{(k)}$ when $n\geq3$.
%\end{proof}
%%%%%%%%%%%%%%%%%%%%%%%%%%%%%%%%%%%%%%%%%%%%%%%%%%%%%%%%%%%%%%%%%%%%
\begin{center} {\bf Acknowledgement} \end{center}
The authors wish to thank the referee for his/her valuable remarks and suggestions to-wards the improvement of the paper.\par
This research work is supported by the Council Of Scientific and Industrial Research, Extramural
Research Division, CSIR Complex, Pusa, New Delhi-110012, India, under the sanction project no. 25(0229)/14/EMR-II and Department of Science and Technology, Govt.of India under the sanction order no. DST/INSPIRE Fellowship/2014/IF140903.

%%%%%%%%%%%%%%%%%%%%%%%%%%%%%%%%%%%%%%%%%%%%%%%%%%%%%%%%%%%%%%%%%%%%%%%%%%%%%%%%%%%%%%%%%%%%%%%%%%%%%%%%%%%%%%%%%%%%%%%%%%%%%%%%%%%%%%%%%%%%%%%%%%%%%%%%%%%%%%%%%%%%

\end{document}